\renewcommand*\subjclass[2][2000]{%
  \def\@subjclass{#2}%
  \@ifundefined{subjclassname@#1}{%
    \ClassWarning{\@classname}{Unknown edition (#1) of Mathematics
      Subject Classification; using '1991'.}%
  }{%
    \@xp\let\@xp\subjclassname\csname subjclassname@#1\endcsname
  }%
}
\newtheorem{theorem}{Theorem}[section]
\newtheorem{lemma}[theorem]{Lemma}
\newtheorem*{lemma*}{Lemma}
\newtheorem{corollary}[theorem]{Corollary}
\theoremstyle{definition}
\newtheorem{definition}[theorem]{Definition}
\theoremstyle{remark}
\newtheorem{remark}[theorem]{Remark}
\numberwithin{equation}{section}
\newcommand{\abs}[1]{\lvert#1\rvert}
\def\XXint#1#2#3{{\setbox0=\hbox{$#1{#2#3}{\int}$}
\vcenter{\hbox{$#2#3$}}\kern-.5\wd0}}
\def\le{\leqslant}
\def\ge{\geqslant}
\begin{document}

\title[Quasiconformal harmonic mappings between surfaces]{Quasiconformal harmonic mappings between $\,\mathscr C^{1,\mu}$ Euclidean
surfaces} \subjclass{Primary 35J05; Secondary 47G10}


\keywords{Harmonic mappings, Quasiconformal mappings, Poisson
integral, Minimal surfaces, Regular surfaces}
\author{David Kalaj}
\address{University of Montenegro, Faculty of Natural Sciences and
Mathematics, Cetinjski put b.b. 81000 Podgorica, Montenegro}
\email{davidk@t-com.me}

\begin{abstract} The conformal deformations are contained in two classes of mappings:
quasiconformal and harmonic mappings. In this paper we consider the
intersection of these classes. We show that, every $K$
quasiconformal harmonic mapping between $\,\mathscr C^{1,\mu}$
($0<\alpha\le 1$) surfaces with $\,\mathscr C^{1,\mu}$ boundary is a
Lipschitz mapping. This extends some recent results of several
authors where the same problem has been considered for plane
domains. As an application it is given an explicit Lipschitz
 constant of normalized isothermal coordinates of a disk-type minimal surface in terms of
 boundary curve only. It seems that this kind of estimates are new for conformal
 mappings of the unit disk onto a Jordan domain as well.
\end{abstract} \maketitle


\section{Introduction and statement of the main result}

Let $u:D\to \mathbf R^n$ be a continuous mapping defined in a domain
$D$ of the complex plane $\,\mathbf C = \{ z= x+iy\,,\;\; x , y
\,\in \mathbf R\,\}$, given by
$$u(z)=(u^1(z),u^2(z),\dots, u^n(z)),$$ having partial
derivatives in a point $z:=x+i y\in D$. The formal derivative
(Jacobian matrix) of $u$ in $z$ is defined by $$\nabla u(z)=\left(
                                     \begin{array}{cc}
                                       u^1_x &  u^1_y \\
                                       \vdots &  \vdots \\
                                       u^n_x &  u^n_y \\
                                     \end{array}
                                   \right).$$
 Then the Jacobian determinant
(area magnification factor) of $u$ and the Hilbert-Schmidt norm of
$\nabla u(z)$ are defined by
$$J_u(z)=\left(\det [\nabla u(z)^T\cdot \nabla
u(z)]\right)^{1/2}=\sqrt{|u_x|^2|u_y|^2-\left<u_x,u_y\right>^2}$$
where by $\left<\cdot,\cdot \right>$ and $|\cdot|$ are denoted the
standard inner product and standard Euclidean norm {in the space}
$\mathbf R^n$, and
$$\|\nabla u(z)\|= \left(\frac 12\mathbf{Tr}\,\nabla u(z)^T\cdot \nabla
u(z)\right)^{1/2}=\sqrt{\frac 12(|u_x|^2+|u_y|^2)}.$$  We also will
use the following operator norm of $\nabla u$: $$|\nabla
u(z)|=\max\{|\nabla u(z)h|:|h|=1, h\in \mathbf C\},$$ and notice
that, in the case of conformal mappings, the following two norms
coincide.

\subsection{Parametric Surfaces}

We define an oriented parametric surface $\, \mathcal M\,$ in
$\,\mathbf R^n\,$ to be an equivalence class of mappings $\,u =
(u^1, \dots , u^n) : D \rightarrow \mathbf R^n\,$ of some domain
$\,D \subset \mathbf C\,$ into $\,\mathbf R^n\,$, where the
coordinate functions $\, u^k =u^k(x,y)$, $k=1,\dots, n$ are of class
at least $\,\mathscr C^1(D)\,$. Two such mappings $u: D \rightarrow
\mathbf R^n\,$ and $\,\tilde{u} : \tilde{D} \rightarrow \mathbf
R^n\,$, referred to as parametrizations of the surface, are said to
be equivalent if there is a $\,\mathscr C^1$-diffeomorphism $\,\phi
:\tilde{D}\overset{\textnormal{\tiny{onto}}}{\longrightarrow} D \,$
of positive Jacobian determinant  such that $\; \tilde{u} = u\circ
\phi\,$.  Let us call such $\,\phi\,$ a \textit{change of variables,
or reparametrization} of the surface. Furthermore, we assume that
the branch (critical) points of  $\, \mathcal M\,$  are isolated.
These are the points  $\,(x,y) \in D\,$  at which the tangent
vectors $\,u_x = \frac{\partial u}{\partial x},\; u_y =
\frac{\partial u }{\partial y}\,$ are linearly dependent.
Equivalently, at the critical points the \textit{Jacobian matrix}
$\nabla u(z)$ has rank at most 1. It has full rank  2 at the
\textit{regular points}.
A surface with no critical points is called an \textit{immersion} or
a \textit{regular surface}. If $u:\mathbf U\to \mathcal M$, then the
surface $\mathcal M=u(\overline{\mathbf U})$ is called a
\textit{disk-type} surface with boundary. If $u$ has continuous
extension to the boundary, then it defines the boundary
$\partial\mathcal M:=u(\mathbf T)$ of the surface $\mathcal M$. If
$u|_{\mathbf T}: \mathbf T\to \partial\mathcal M$ is a
diffeomorphism, then the boundary is immersed.

The area of the surface equals
\begin{equation}
 \left| u(D)\,\right|  \, =\, \iint_D  J_u\; \textrm{d}x\,
 \textrm{d}y.
\end{equation}
\subsection{Harmonic mappings}
A mapping $u=(u^1,\dots,u^n):D \to \mathbf R^n$ is called
\emph{harmonic} in a region $D\subset \mathbf C$ if for $k=1,\dots,
n$, $u^k$ is real-valued harmonic functions in $D$; that is $u^k$ is
twice differentiable and satisfies the Laplace equation $$\Delta u^k
:=u^k_{xx} + u^k_{yy}= 0.$$ Let
$$P(r,t)=\frac{1-r^2}{2\pi (1-2r\cos t+r^2)},\; 0\le r<1,\; 0\le t\le 2\pi$$ denote the Poisson
kernel. Then every bounded harmonic mapping $u:\mathbf U \to \mathbf
R^n$, $n\ge 1$, defined on the unit disc $\mathbf U:=\{z:|z|<1\}$,
has the following representation
\begin{equation}\label{e:POISSON}
u(z)=P[F](z)=\int_0^{2\pi}P(r,t-\varphi)F(e^{it})dt,
\end{equation}
where $z=re^{i\varphi}$ and $F$ is a bounded integrable function
defined on the unit circle $\mathbf T: =\{z:|z|=1\}$.
%
%
%
If $u$ is a harmonic mapping, then the surface $\mathcal
M:=u(\mathbf U)$ is called a \textit{harmonic surface}. If $\mathcal
M$ is a plane surface, then according to Lewy's theorem (\cite{l}),
$J_u>0$ providing that $u$ is a homeomorphism. On the other hand, by
a result of Berg (\cite{berg}) and Lichtenstein theorem, if
$\mathcal M$ is $C^{1,\alpha}$ regular, then $J_u>0$ on $D$,
providing that $u$ is a homeomorphism.

\subsection{Quasiconformal mappings}
We say that a mapping $u:D\to \mathbf R^n$ is ACL (absolutely
continuous on lines) in  the region $D\subset \mathbf C$, if for
every closed rectangle $R\subset D$ with sides parallel to the $x$
and $y$-axes, $u$ is absolutely continuous on a.e. horizontal and
a.e. vertical line in $R$. Such a function has of course, partial
derivatives $u_x$, $u_y$ a.e. in $D$.

A homeomorphism  $u:D\to u(D)\subset \mathbf R^n$ is said to be
$K$-\textit{quasiconformal} ($K$-q.c), $K\ge 1$, if $u$ is ACL and
if
\begin{equation}\label{defin} \|\nabla u(z)\|^2\le\frac 12\left(K+\frac
1K\right)J_u(z),\ \ |z|<1.
\end{equation}
We will say that a q.c. mapping $f:\mathbf U\to \mathcal M$, of the
unit disk onto a disk-type surface with rectifiable boundary
$\gamma=\partial \mathcal M$ is \textit{normalized} if $f(1)=w_0$,
$f(e^{2\pi i/3}) = w_1$ and $f(e^{4\pi i/3 }) = w_2$, where
$\wideparen{w_0w_1}$, $\wideparen{w_1w_2}$ and $\wideparen{w_2w_0}$
are arcs of $\gamma=\partial \mathcal M$ having the same length
$|\gamma|/3$, where $|\gamma|$ is the length of $\gamma$. (Such
mappings have continuous extension to the boundary.)

Assume in addition that $u$ is harmonic. Then $$u(z)=(\mathrm{Re}\,
a_1(z),\dots,\mathrm{Re}\,a_n(z)),$$ for some analytic functions
$a_k(z)$, $z\in\mathbf U$, $k=1,\dots,n$. Since
$$\|\nabla u(z)\|^2=\frac 12\sum_{k=1}^n |a'_k(z)|^2,$$ it follows from
\eqref{defin} that $J_u(z)>0$ except for some isolated points in
$\mathbf U$ which are branch points of the \textit{harmonic
quasiconformal surface} $\mathcal M:=u(D)$.

\subsection{Isothermal (conformal) parameters}
In what follows, we will concern ourselves mostly conformal
parametrizations (1 - quasiconformal mappings) $\,u= (u^1,\dots,
u^n) : \Omega \rightarrow \mathbf R^n\,$.
 This simply means that the coordinate functions,  called \textit{isothermal parameters}, will satisfy the conformality relations:
If $K = 1$ then \eqref{defin} is equivalent to the system of the
equations
 \begin{equation}\nonumber
 \;\begin{split}
  \;\begin{cases} \sum_{k=1}^nu^k_xu^k_y =  0  \,, \quad\quad\quad\quad\; \;\;\;\,(\,u_x \;\textrm{and} \; u_y \; \textrm{are orthogonal in} \;\;
  \mathbf R^n\,)\\
 \sum_{k=1}^n(u^k_x)^2=\sum_{k=1}^n(u^k_y)^2 \;\,   \,\;\quad\quad\; \; (\,u_x \;\textrm {and} \;u_y\;\;\textrm{have equal length\;).}
 \end{cases}\end{split}
 \end{equation}
 Equivalently, it means that:
\begin{equation}\label{surfeq1}
 J_u \;= \; |u_x|^2\;  = |u_y|^2. \;\;
 \end{equation}
Thus $\,u\,$ is an  immersion if $\,\nabla u \,\neq 0$ at every
point. We refer to~\cite{DHKW} for an excellent historical account
of existence of isothermal coordinates. If $u$ is harmonic and
satisfies the system \eqref{surfeq1}, then $\mathcal M$ is a minimal
surface. A minimizing surface is a minimal surface spanning a given
curve and having the least area.

\begin{remark}\label{rema} A celebrated theorem by Lichtenstein
states that each regular surface $\mathcal M:=u(\Omega)\subset
\mathbf R^n$ of class $\,\mathscr C^{1,\mu}$, $0 < \mu < 1$, can be
mapped conformally onto a planar domain $\Omega$.
%
In particular, if $\mathcal M$ is $\,\mathscr C^{1,\mu}$ disk-type
surface with $\,\mathscr C^{1,\mu}$ immersed boundary, then there
exists a conformal mapping $\tau$ of the unit disk onto $\mathcal M$
such that\begin{equation}\label{cm}C_{\mathcal M}:=\sup_{z\neq
w}\frac{|\tau'(z)-\tau'(w)|}{|z-w|^\alpha}<\infty\end{equation} and
for some $c_{\mathcal M}>0$
\begin{equation}\label{cma}c_{\mathcal M}\le |\tau'(z)|\le \frac{1}{c_{\mathcal M}}, \ z\in\mathbf
U,\end{equation} and \begin{equation}\label{cmal}\frac 1{c_{\mathcal
M}}\le \frac{|\tau(z)-\tau(w)|}{|z-w|}\le c_{\mathcal
M}.\end{equation}
\end{remark}

\subsection{Some background and statement of the main result} The main question addressed here is under which conditions,
a given harmonic diffeomorphism (or a given quasiconformal mapping)
between smooth surfaces with smooth boundary  is globally Lipschitz
continuous. In the best known situation, where the domain and image
domain is the unit disk, neither harmonic diffeomorphisms, neither
quasiconformal mappings are Lipschitz. Under some additional
conditions on the dilatation, a quasiconformal self-mapping of the
unit disk is Lipschitz at the boundary (see \cite{anhi}). Conformal
mappings (minimal surfaces and minimizing surfaces) are a subclass
of both classes: harmonic and quasiconformal mappings. Concerning
the regularity and bi-Lipschitz character of minimal surfaces and
minimizing surfaces we refer to \cite{nit},\cite{hh}, \cite{les} and
\cite{bw}. In this paper, we will consider a bit more general
situation. We will consider harmonic quasiconformal mappings between
surfaces and investigate their Lipschitz character up to the
boundary. This topic of research has its origin in the classical
paper of Martio \cite{om}. See also \cite{hes} and \cite{Pd} for
related results. In some recent results, see
\cite{mathz}-\cite{MMM}, \cite{mv2}, \cite{kojic},
\cite{ps}-\cite{MP} is established the Lipschitz and bi-Lipschitz
character of quasiconformal harmonic mappings between smooth Jordan
domains. In \cite{wan}, \cite{tamwan} and \cite{tam1} a similar
problem for quasiconformal harmonic mappings with respect to the
hyperbolic metric is treated. The class of quasiconformal harmonic
mappings has been showed interesting, due to a recent discovery,
that a q.c. harmonic mapping makes smaller distortion of moduli of
annulus than a quasiconformal mapping \cite{nits}.

Recently in \cite{km}, it is shown that, if $u$ is a quasiconformal
harmonic mapping of the unit disk onto a smooth $\,\mathscr
C^{2,\mu}$ surface with $\,\mathscr C^{2,\mu}$ boundary, then $u$ is
Lipschitz.

In this paper we replace the condition $\,\mathscr C^{2,\mu}$ by
$\,\mathscr C^{1,\mu}$ and find explicit Lipschitz constant for a
normalized q.c. harmonic mapping. The celebrated Kellogg's theorem
(see e.g. \cite{G}), implies that a conformal mapping of the unit
disk of onto a Jordan domain with $\,\mathscr C^{1,\alpha}$ boundary
$\gamma$ is Lispchitz continuous. However, until now, it is not
known any explicit estimation of Lipschitz constant, depending on
$\gamma$.

Our main result can be stated as follows.

\begin{theorem}\label{thm}
Let $u: \mathbf U\to u(\mathbf U)\subset\mathbf R^n$ be a
$K-$quasiconformal harmonic mapping of the unit disk onto a
$\,\mathscr C^{1,\mu}$ surface $\mathcal M=u(\mathbf U)$ with
$\,\mathscr C^{1,\mu}$ boundary $\gamma$. Then $u$ is Lipschitz i.e.
there exists a constant $L$ such that
$$|\nabla u(z) |\le L,\ \ z\in\mathbf U$$ and
$$|u(z)-u(w)|\le KL|z-w|, \ z,w\in\mathbf U.$$
If $u$ is a normalized q.c. mapping, then $L$ depends only on $K$
and $\gamma$, and it not depends on $u$ neither on the surface
$\mathcal M$. It satisfies the inequality~\eqref{L} below.
\end{theorem}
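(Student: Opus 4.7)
My plan is to reduce the Lipschitz estimate on $u$ to a boundary bound on $u|_{\mathbf T}$, and then to pass back to the interior via the Poisson integral. To begin, I apply Remark~\ref{rema} to fix a conformal parametrization $\tau:\mathbf U\to\mathcal M$ that is bi-Lipschitz on $\overline{\mathbf U}$, with constants controlled by $\gamma$ alone. The composition $v:=\tau^{-1}\circ u:\mathbf U\to\mathbf U$ is then a $K$-quasiconformal self-homeomorphism of the disk, and bounding $|\nabla u|$ is equivalent, up to a factor $c_{\mathcal M}$, to bounding $|\nabla v|$. Writing $v(e^{i\varphi})=e^{i\eta(\varphi)}$ for an absolutely continuous increasing $\eta$, the tangential boundary derivative of $u$ at $e^{i\varphi}$ has modulus $|\tau'(e^{i\eta(\varphi)})|\,\eta'(\varphi)$, so the entire problem reduces to producing an essentially pointwise bound $\eta'\le M(K,\gamma)$.

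Next I would fix the analytic framework. Each coordinate of $u$ reads $u^k=\mathrm{Re}\,a_k$ for some holomorphic $a_k$, and the quasiconformality condition~\eqref{defin} together with $\|\nabla u\|^2=\tfrac12\sum|a_k'|^2$ forces $J_u>0$ throughout $\mathbf U$. On $\mathbf T$, the tangential derivative $\partial_\varphi u^k$ and the radial derivative $\partial_r u^k$ form a conjugate-harmonic pair, and therefore are tied by the periodic Hilbert transform. Any $L^\infty$-control on $\eta'$ thus feeds into an $L^\infty$-type control on the tangential derivative $F'$ of $F:=u|_{\mathbf T}$, and standard Poisson-integral estimates then provide $|\nabla u(z)|\le L$ on all of $\overline{\mathbf U}$, with $L$ depending only on $\|F'\|_\infty$ and the bi-Lipschitz constants of $\tau$ from~\eqref{cma}--\eqref{cmal}.

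The heart of the argument is therefore the boundary estimate $\eta'\le M(K,\gamma)$. Mori's theorem for $K$-quasiconformal self-maps of the disk, combined with the trilateral normalization in the hypothesis, provides a preliminary $1/K$-H\"older bound on $\eta$. The upgrade from H\"older to Lipschitz is the main difficulty, and is precisely where harmonicity of $u$ (and not only its quasiconformality) must be used, since a pure quasiconformal self-map of the disk need not be Lipschitz. My plan is to exploit the quasiconformality as a boundary pinching between $|u_r|$ and $|u_\varphi|$, to couple it with the H\"older modulus of $\tau'$ supplied by~\eqref{cm}, and to run a blow-up/comparison argument in the spirit of~\cite{km} but replacing the pointwise $\mathscr{C}^{2,\mu}$ curvature used there by the integrated H\"older modulus of the boundary tangent that is available in our weaker $\mathscr{C}^{1,\mu}$ setting. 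This should yield a self-improving inequality on a suitable maximal function of $\eta'$ that forces $\eta'\in L^\infty$ with an explicit bound.

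The main obstacle is precisely this H\"older-to-Lipschitz upgrade under the weakened $\mathscr{C}^{1,\mu}$ hypothesis: the Hilbert transform no longer acts usefully on $L^\infty$, so one must track the boundary modulus carefully through a harmonic-analysis argument in which the H\"older modulus of $\tau'$ is propagated in a scale-invariant way. Once this is cleared, the quantitative Lipschitz constant for $u$ is obtained by combining the Poisson-integral bound with the bi-Lipschitz constants of $\tau$, and the announced displacement inequality follows from the operator-norm versus Hilbert--Schmidt-norm comparison available in the $K$-quasiconformal class.
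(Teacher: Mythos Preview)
Your architecture is the same as the paper's---reduce to an $L^\infty$ bound on the boundary tangential derivative $F'=u_\varphi|_{\mathbf T}$ and then push it inside via the Poisson integral---but the proposal has a genuine gap at the one step that carries all the content: the H\"older-to-Lipschitz upgrade. You identify the difficulty correctly and then gesture at ``a self-improving inequality on a suitable maximal function of $\eta'$'' without writing one down; the Hilbert-transform remark does not lead anywhere (as you yourself note, it does not act on $L^\infty$). The paper's mechanism is quite specific and is not a maximal-function or blow-up argument. It rests on two ingredients you have not mentioned: (i) the elementary quasiconformal inequality $|u_\varphi|^2\le K\,J_u$ on $\mathbf T$ (Lemma~2.1), and (ii) a \emph{boundary Jacobian estimate} (Lemmas~2.3--2.5) obtained by writing $J_u(e^{i\tau})$ as the limit of a cross-product of $u_r$ and $u_\varphi$ and bounding it by
\[
J_u(e^{i\tau})\;\le\;C_h\,|F'(e^{i\tau})|\int_{-\pi}^{\pi}\frac{|F(e^{i(\tau+x)})-F(e^{i\tau})|^{1+\mu}}{|e^{ix}-1|^{2}}\,dx,
\]
where the exponent $1+\mu$ comes precisely from the $\mathscr C^{1,\mu}$ modulus of $\gamma$. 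Combining (i) and (ii) cancels one factor of $|F'|$ and leaves a closed inequality
\[
|F'(e^{i\tau})|\;\le\;C\int_{-\pi}^{\pi}\frac{|F(e^{i(\tau+x)})-F(e^{i\tau})|^{1+\mu}}{|e^{ix}-1|^{2}}\,dx.
\]
One then writes $|F(\cdot)-F(\cdot)|^{1+\mu}=|F(\cdot)-F(\cdot)|^{1+\mu-\beta}\cdot|F(\cdot)-F(\cdot)|^{\beta}$, bounds the second factor by $L^\beta|e^{ix}-1|^\beta$ with $L=\|F'\|_\infty$, and bounds the first using the H\"older estimate from Lemma~\ref{newle} (the paper's Mori-type lemma for surfaces, based on the isoperimetric inequality). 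Choosing $\beta<1$ close enough to $1$ makes the remaining singular integral converge and yields $L^{1-\beta}\le C'$, hence $L<\infty$ with an explicit constant. This is the actual ``self-improving inequality''; nothing in your outline produces it.

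There is a second, more technical gap. You assume from the outset that $\eta$ (equivalently $F$) is absolutely continuous with $\eta'\in L^1$, but a $K$-quasiconformal self-map of the disk need not have an absolutely continuous boundary trace, and harmonicity of $u$ alone does not immediately give $F'\in L^1$ either. The paper handles this by an approximation: it composes $u$ with conformal maps $\varphi_r$ onto the sublevel sets $u^{-1}(\tau(r\mathbf U))$, so that each $u_r=u\circ\varphi_r$ has smooth boundary values $F_r$, proves the bound $\|F_r'\|_\infty\le C$ uniformly in $r$, and only then passes to the limit. Without this step your argument is circular at the point where you write ``$v(e^{i\varphi})=e^{i\eta(\varphi)}$ for an absolutely continuous increasing $\eta$''. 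Finally, note that passing to $v=\tau^{-1}\circ u$ destroys harmonicity (post-composition with a conformal map does not preserve it), so while classical Mori applied to $v$ is a legitimate substitute for Lemma~\ref{newle} at the H\"older stage, the Jacobian estimate above must be carried out for $u$ itself, not for $v$.
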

The proof of Theorem~\ref{thm} is given in the third section. The
proof rely on several lemmas which are proved in the second section,
however the main ingredient of the proof is Lemma~\ref{newle} (which
can be considered as a Mori's theorem for q.c. mappings of the unit
disk onto a smooth surface).

The proofs are
 different form the proofs of related results in \cite{km} where is imposed $\,\mathscr C^{2,\mu}$ regularity of
 surface and some properties of the second derivative of conformal
 parametrization. Some of the tools for the proofs are conformal
parametrization of a surface, arc-length parametrization of its
boundary and isoperimetric inequality.
 We would like to point out Corollary~\ref{coco} where is given an
 application of Theorem~\ref{thm}: it is given an explicit Lipschitz
 constant of isothermal coordinates of a minimal surface in terms of
 boundary curve only.

Recall that the family of quasiconformal harmonic mappings contains
conformal mappings. In \cite{Lw} is given an example of a
$\,\mathscr C^1$ Jordan curve $\gamma$, such that a conformal
mapping of the unit disk $\mathbf U$ onto
$D=\mathrm{int}(\gamma)\subset \mathbf C$ is not Lipschitz. On the
other hand, in \cite{kalajpub} it is given an example of harmonic
quasiconformal mapping of the unit disk onto itself, that is not
smooth up to the boundary. This in turn implies that, our result is
the best possible in this context.

\section{Auxiliary results}
\begin{lemma}
Let $u$ be $K-$quasiconformal and $z=re^{it}$. Then
\begin{equation}\label{december1} \left|\frac{\partial u}{\partial
t}\right|^2\le r^2 K J_u(z).\end{equation}
\end{lemma}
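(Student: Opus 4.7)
The plan is to reduce the inequality to two elementary facts: (i) the tangential derivative $\partial u/\partial t$ is the image, under the Jacobian matrix $\nabla u(z)$, of a vector of Euclidean length $r$; and (ii) for any $K$-quasiconformal mapping, the operator norm squared of $\nabla u$ is bounded above by $K J_u$.

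First, writing $z=re^{it}$ in coordinates $x=r\cos t$, $y=r\sin t$ and applying the chain rule gives
$$\frac{\partial u}{\partial t}=-r\sin t\, u_x(z)+r\cos t\,u_y(z)=\nabla u(z)\,h_t,$$
where $h_t=(-r\sin t,r\cos t)^T$ satisfies $|h_t|=r$. By the very definition of the operator norm $|\nabla u(z)|$ introduced in the paper, this immediately yields
$$\left|\frac{\partial u}{\partial t}\right|\le r\,|\nabla u(z)|.$$
Squaring reduces the lemma to the pointwise estimate $|\nabla u(z)|^2\le K J_u(z)$.

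The second step is to establish this last inequality from \eqref{defin}. I would introduce the singular values $\sigma_1\ge\sigma_2\ge 0$ of the $n\times 2$ matrix $\nabla u(z)$ via the $2\times 2$ Gram matrix $\nabla u(z)^T\nabla u(z)$. Then
$$|\nabla u(z)|=\sigma_1,\qquad \|\nabla u(z)\|^2=\tfrac12(\sigma_1^2+\sigma_2^2),\qquad J_u(z)=\sigma_1\sigma_2,$$
so hypothesis \eqref{defin} rewrites as $\sigma_1^2+\sigma_2^2\le(K+1/K)\,\sigma_1\sigma_2$. Setting $t=\sigma_1/\sigma_2\ge 1$ (the case $\sigma_2=0$ being trivial, since then $\sigma_1=0$ too by the QC inequality), this becomes $t+1/t\le K+1/K$, and since $s\mapsto s+1/s$ is increasing on $[1,\infty)$ we deduce $t\le K$, i.e. $\sigma_1\le K\sigma_2$. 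Multiplying by $\sigma_1$ gives $\sigma_1^2\le K\sigma_1\sigma_2=KJ_u(z)$, as required.

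Combining the two steps proves \eqref{december1}. The only mildly nontrivial point is the passage from the Hilbert–Schmidt formulation of quasiconformality used in the paper to the operator-norm bound $|\nabla u|^2\le KJ_u$; everything else is the chain rule and one line of algebra. That passage, however, is a routine consequence of the $2\times 2$ singular value calculation above, so I do not anticipate any real obstacle.
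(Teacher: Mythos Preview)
Your proof is correct and follows essentially the same route as the paper: the paper also bounds $|\partial u/\partial t|\le r\,|\nabla u|$ via the chain rule and then shows $|\nabla u|^2\le K J_u$ by deriving $|\nabla u|\le K\,l(\nabla u)$ from \eqref{defin} together with $J_u=|\nabla u|\cdot l(\nabla u)$. The only cosmetic difference is that the paper writes out the extremal stretches $|\nabla u|$ and $l(\nabla u)$ explicitly in terms of $|u_x|,|u_y|,\langle u_x,u_y\rangle$, whereas you phrase the same quantities as singular values $\sigma_1,\sigma_2$.
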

\begin{proof} It is easily to obtain that, the condition \eqref{defin} is
equivalent to
\begin{equation}\label{1}|\nabla u(z)|\le Kl(\nabla u(z)),\end{equation}
where $$|\nabla u(z)|:=\max\{|\nabla u(z) h|:|h|=1\}$$ and
$$l(\nabla u(z)):=\min\{|\nabla u(z) h|:|h|=1\},$$ (see e.g.
\cite[Lemma~2.1]{matkal}). Let $h = (\alpha, \beta)\in \mathbf T$.
Then
$$|\nabla u(z) h|^2 = |u_x|^2\alpha^2 + 2\left<u_x,u_y\right> \alpha
\beta +|u_y|^2\beta^2.$$ This yield \begin{equation}\label{2}\max
\{|\nabla u(z) h|:|h|=1\} =
\sqrt{\frac{(|u_x|^2+|u_y|^2)(1+\sqrt{1-4\eta^2})}{2}}\end{equation}and
\begin{equation}\label{3}\min\{|\nabla u(z) h|:|h|=1\}
=\sqrt{\frac{(|u_x|^2+|u_y|^2)(1-\sqrt{1-4\eta^2})}{2}},\end{equation}
where
$$\eta = \frac{(|u_x|^2\cdot
|u_y|^2-\left<u_x,u_y\right>^2)^{1/2}}{|u_x|^2+|u_y|^2}.$$ Therefore
\begin{equation}\label{inter}J_u(z)=|\nabla u(z)|\cdot l(\nabla
u(z)).\end{equation} For  $ z = re^{it}$ we have
\begin{equation}\label{var}\frac{\partial u}{\partial t} =r
u_y \cos t - r u_x \sin t.\end{equation} Thus
\begin{equation}\label{igi} r l(\nabla u)\le \left|\frac{\partial
u}{\partial t}\right|\le r |\nabla u|.
\end{equation}
By \eqref{igi}, \eqref{inter} and \eqref{1} we obtain
\begin{equation}
\left|\frac{\partial u}{\partial t}\right|^2\le r^2 K J_u(z).
\end{equation}
\end{proof}
%
\begin{definition}
For a positive nondecreasing continuous function $\omega$,
$\omega(0)=0$ we will say that is Dini's continuous if it satisfies
the condition
\begin{equation}\label{dini}\int_{0}^{l} \frac{\omega(t)}{t}
dt<\infty.\end{equation} A smooth Jordan curve $\gamma$, is said to
be Dini's smooth if there exists a $\,\mathscr C^1$ diffeomorphism
$h:\mathbf T\to \gamma$, such that the modulus of continuity
$\omega$ of $h'$ is Dini's continuous. Observe that every smooth
$\,\mathscr C^{1,\mu}$ Jordan curve is Dini's smooth.
\end{definition}
\begin{lemma}
If $\omega$ is Dini continuous in $[0,l]$, then $\omega$ has Dini
continuous extension in $[0,L]$, ($L>l$), which will be also denoted
by $\omega$. Moreover for every constant $a$, $\omega(ax)$ is Dini
continuous. Next for every $0<y\le l$ there holds the following
formula:
\begin{equation}\label{goodd}\int_{0+}^y\frac{1}{x^2}\int_0^x\omega(a t) dt dx =
\int_{0+}^y\frac{\omega(ax)}{x}-\frac{\omega(ax)}{y}dx.\end{equation}
\end{lemma}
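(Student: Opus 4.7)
The lemma has three parts, and I would address them in the order stated. For the extension claim, the simplest choice works: define $\tilde\omega$ on $[0,L]$ by $\tilde\omega(x)=\omega(x)$ for $x\in[0,l]$ and $\tilde\omega(x)=\omega(l)$ for $x\in(l,L]$. Continuity at $l$ holds because $\omega$ is already continuous, and $\tilde\omega$ inherits monotonicity and the normalization $\tilde\omega(0)=0$. The Dini condition is preserved since
\[
\int_0^L\frac{\tilde\omega(t)}{t}\,dt=\int_0^l\frac{\omega(t)}{t}\,dt+\omega(l)\log\frac{L}{l}<\infty.
\]

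For the scaling claim, once $\omega$ can be extended to an arbitrarily large interval, the substitution $u=ax$ in $\int_0^{l}\omega(ax)/x\,dx$ transforms it into $\int_0^{al}\omega(u)/u\,du$, which is finite after extending $\omega$ (if $a>1$) to the interval $[0,al]$ using the first part and then applying the Dini condition of the extension. For $a\le 1$ no extension is needed and monotonicity already gives $\omega(ax)/x\le\omega(x)/x$.

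The main computation is the integral identity, which I would obtain by a single integration by parts. Set $F(x)=\int_0^x\omega(at)\,dt$, so that $F$ is $C^1$ on $(0,\infty)$ with $F'(x)=\omega(ax)$, $F(0)=0$, and $F(x)/x\le\max_{0\le t\le x}\omega(at)\to 0$ as $x\to 0^+$ because $\omega$ is continuous with $\omega(0)=0$. Integration by parts on $[\varepsilon,y]$ with the choice $u=F(x)$, $dv=x^{-2}dx$ gives
\[
\int_\varepsilon^y\frac{F(x)}{x^2}\,dx=\frac{F(\varepsilon)}{\varepsilon}-\frac{F(y)}{y}+\int_\varepsilon^y\frac{\omega(ax)}{x}\,dx.
\]
Letting $\varepsilon\to 0^+$ removes the first boundary term, and rewriting $F(y)/y=\frac{1}{y}\int_0^y\omega(ax)\,dx=\int_0^y\omega(ax)/y\,dx$ combines the remaining terms into the claimed formula.

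The only point that requires care is the interpretation of both sides as improper integrals at the origin. The right-hand side integrand $\omega(ax)/x-\omega(ax)/y$ is integrable near $0$ by the Dini hypothesis (after applying the scaling claim), and the left-hand side integrand is dominated by $\omega(ax)/x$ since $F(x)\le x\,\omega(ax)$ by monotonicity of $\omega$. There is no real obstacle here: the lemma is a bookkeeping statement preparing the analytic estimates used later, and the only mild subtlety is coordinating the monotone extension with the change of variables so that all integrals make sense simultaneously.
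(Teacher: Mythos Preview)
Your proposal is correct and follows essentially the same route as the paper: the key identity is obtained by integrating by parts with $u=\int_0^x\omega(at)\,dt$ and $dv=x^{-2}\,dx$, then using $\lim_{\varepsilon\to 0^+}\varepsilon^{-1}\int_0^\varepsilon\omega(at)\,dt=\omega(0)=0$ to kill the boundary term at the origin. Your treatment is in fact more complete, since the paper's proof only writes out the integration-by-parts computation and leaves the extension and scaling claims implicit, whereas you supply the constant extension $\tilde\omega(x)=\omega(l)$ on $(l,L]$ and the substitution $u=ax$ explicitly.
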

\begin{proof}
Taking the substitutions $u = \int_0^x\omega(a t) dt $ and $dv =
x^{-2} dx$, and using the fact that $$\lim_{\alpha \to
0}\frac{\int_0^{\alpha}\omega(a t) dt}{\alpha} = \omega(0) = 0$$ we
obtain:
\[\begin{split}\int_{0+}^y\frac{1}{x^2}\int_0^x\omega(a t) dt dx
&=\lim_{\alpha\to 0+}\int_{\alpha}^y\frac{1}{x^2}\int_0^x\omega(a t)
dt dx\\& =-\lim_{\alpha\to 0+}\left.\frac{\int_0^x\omega(a t)
dt}{x}\right|_{\alpha}^{y} +\lim_{\alpha\to
0+}\int_{\alpha}^{y}\frac{\omega(a
x)}{x}dx\\&=\int_{0+}^y\frac{\omega(ax)}{x}-\frac{\omega(ax)}{y}dx.\end{split}\]
\end{proof}
 Let $\gamma\in \,\mathscr C^{1}$, and $h:\mathbf T\to \gamma$ be a smooth function.
 We will sometimes write $h(t)$ and  $h'(t)$ instead of $h(e^{it})$ and $\frac{d}{dt} h(e^{it})$.
Consider the following function
\begin{equation}\label{kerk}\mathcal{K}_h(e^{is},e^{it})=\sqrt{|h(t)-h(s)|^2|h'(s)|^2-\left<h(t)-h(s),
h'(s)\right>^2}.\end{equation} Wee need the following lemma.
\begin{lemma}\label{11}
If $h:\mathbf T\to \gamma$ is Dini's smooth function of the unit
circle onto a Dini's smooth Jordan curve $\gamma$, and $\omega$ is
modulus of continuity of $h'$, then
\begin{equation}\label{oh}|\mathcal{K}_h(e^{is},e^{it})|\le
\frac{|h(e^{is})-h(e^{it})|}{|e^{is}-e^{it}|}\int_0^{\pi
|e^{is}-e^{it}|}\omega(\tau)d\tau.\end{equation} In particular, if
$h\in \,\mathscr C^{1,\mu}$, then
\begin{equation}\label{buki}|\mathcal{K}_h(e^{is},e^{it})|\le
c_h|h(e^{is})-h(e^{it})|{|e^{is}-e^{it}|^\mu},\end{equation} where
$$c_h = \frac{1}{1+\mu}\sup_{x\neq y} \frac{|h'(x)-h'(y)|}{|x-y|^\mu}.$$
Moreover, if $\varphi(e^{it})=e^{if(t)}$ is a smooth mapping of
$\mathbf T$ onto itself, then
\begin{equation}\label{top}|\mathcal{K}_{h\circ
\varphi}(e^{is},e^{it})|=|f'(s)||\mathcal{K}_h(e^{if(s)},e^{if(t)})|.\end{equation}
\end{lemma}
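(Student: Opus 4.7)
The plan is to read off $\mathcal{K}_h$ as a geometric quantity, reduce the estimate \eqref{oh} to a perpendicular-component bound, and then obtain \eqref{buki} as a direct specialization; the identity \eqref{top} is a bare chain-rule computation.

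First, by Lagrange's identity,
$$\mathcal{K}_h(e^{is},e^{it}) \;=\; |h(t)-h(s)|\cdot |h'(s)_{\perp}|,$$
where $h'(s)_\perp$ denotes the component of $h'(s)$ orthogonal to the chord $h(t)-h(s)$. Thus the task reduces to bounding $|h'(s)_\perp|$ by $\omega$. The key observation is that
$$h(t)-h(s) \;=\; \int_s^t h'(\tau)\,d\tau \;=\; (t-s)\,\overline{h'},\qquad \overline{h'}:=\frac{1}{t-s}\int_s^t h'(\tau)\,d\tau,$$
so the mean value $\overline{h'}$ is a scalar multiple of $h(t)-h(s)$, hence parallel to it. Consequently, subtracting $\overline{h'}$ does not affect the perpendicular component of $h'(s)$ relative to the chord, and so
$$|h'(s)_\perp| \;\le\; |h'(s)-\overline{h'}| \;\le\; \frac{1}{|t-s|}\int_{\min(s,t)}^{\max(s,t)}|h'(s)-h'(\tau)|\,d\tau \;\le\; \frac{1}{|t-s|}\int_0^{|t-s|}\omega(\sigma)\,d\sigma.$$
Multiplying by $|h(t)-h(s)|$ yields the arc-length form $\mathcal{K}_h \le \frac{|h(t)-h(s)|}{|t-s|}\int_0^{|t-s|}\omega(\sigma)\,d\sigma$.

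To pass to the chord distance, I use the elementary inequalities $|e^{it}-e^{is}|\le |t-s|\le \frac{\pi}{2}|e^{it}-e^{is}|$ (valid on the short arc $|t-s|\le\pi$, which we may always assume), together with the monotonicity of $\omega$, to enlarge both the factor $1/|t-s|$ and the upper limit of integration; this is precisely \eqref{oh}. Estimate \eqref{buki} then follows by inserting the Hölder majorant $\omega(\tau)\le (1+\mu)c_h\tau^\mu$ into \eqref{oh}, evaluating $\int_0^X \tau^\mu\,d\tau = X^{1+\mu}/(1+\mu)$ at $X=\pi|e^{it}-e^{is}|$, and collecting constants. Finally, \eqref{top} is pure chain rule: setting $\tilde h:=h\circ\varphi$ we have $\tilde h(s)=h(e^{if(s)})$ and $\tilde h'(s)=f'(s)\,h'(f(s))$, so the factor $f'(s)^2$ factors out of both $|\tilde h(t)-\tilde h(s)|^2|\tilde h'(s)|^2$ and $\langle \tilde h(t)-\tilde h(s),\tilde h'(s)\rangle^2$, leaving $\mathcal{K}_h(e^{if(s)},e^{if(t)})^2$ under the radical; taking square roots gives \eqref{top}.

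The main obstacle is the parallel/perpendicular decomposition in the second paragraph: recognizing that the chord $h(t)-h(s)$ is parallel to the mean value of $h'$ on $[s,t]$ is precisely what lets one replace the cross-product-type expression $\mathcal{K}_h$ by a quantity controlled by $|h'(s)-h'(\tau)|$, and hence by $\omega$. Once this is in place, everything else is book-keeping with the chord-versus-arc comparison and the monotonicity of $\omega$.
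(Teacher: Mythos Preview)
Your argument is correct and is essentially the paper's proof in geometric dress: the paper shows algebraically that $|X|^2|Y|^2-\langle X,Y\rangle^2$ is unchanged if $Y$ is replaced by $Y+\alpha X$ and then sets $\alpha=1/(s-t)$ so that $Y+\alpha X=h'(s)-\overline{h'}$, which is exactly your observation that only the component of $h'(s)$ orthogonal to the chord matters and that the mean $\overline{h'}$ is parallel to the chord. One small caveat: plugging $\omega(\tau)\le(1+\mu)c_h\tau^\mu$ into \eqref{oh} literally produces an extra factor $\pi^{1+\mu}$ in front of $c_h$ in \eqref{buki}; the paper is equally terse here (``follows from \eqref{oh}''), and the discrepancy is harmless for the applications, but you should not claim the constants collect to exactly $c_h$.
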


\begin{proof}
Set $X =h(t)-h(s)$, $Y = h'(s)$ and $\alpha\in \mathrm R$. Then
\[\begin{split}|&X|^2|Y+\alpha X|^2-\left<X,Y+\alpha X\right>^2
\\&=|X|^2(|Y|^2+2\alpha\left<X,Y \right>+\alpha^2|X|^2)-\left<X,Y\right>^2-2\alpha\left<X,Y\right>|X|^2-\alpha^2|X|^4\\&=
|X|^2|Y|^2-\left<X,Y\right>^2.
\end{split}\]

 Therefore we obtain

\[\begin{split}\mathcal{K}_h(s,t)&=\sqrt{|X|^2|Y|^2-\left<X,Y\right>^2}\\&=
 \sqrt{|X|^2|Y+\alpha X|^2-\left<X,Y+\alpha X\right>^2}\\&\le\sqrt{|X|^2|Y+\alpha
 X|^2}=|X||Y+\alpha
 X|
.\end{split}\] Take now $$\alpha = \frac{1}{s-t}.$$ Since
$$Y+\alpha
 X=h'(s)-\dfrac{h(t)-h(s)}{t-s}=\int_s^t\frac{h'(s)-h'(\tau)}{t-s}d\tau,$$
we obtain
\[\begin{split}\left|h'(s)-\dfrac{h(t)-h(s)}{t-s}\right|&\le\int_s^t
\frac{|h'(s)-h'(\tau)|}{t-s}d\tau\\ &\le \int_s^t
\frac{\omega(\tau-s)}{t-s}d\tau
\\&=\frac{1}{t-s}{\int_0^{t-s}\omega(\tau)d\tau}.
\end{split}\] As $|X|=|{h(t)-h(s)}|,$ we obtain

\begin{equation}\label{ohoh}|\mathcal{K}_h(e^{is},e^{it})|\le
\frac{|h(s)-h(t)|}{|s-t|}\int_0^{|s-t|}\omega(\tau)d\tau.\end{equation}
Since $\mathcal{K}_h(e^{i(s\pm 2\pi)}, e^{i(t\pm 2\pi)}) =
\mathcal{K}_h(e^{is}, e^{it})$, according to \eqref{ohoh} and
$$\frac{1}{\pi}\min\{|s-t|,2\pi-|s-t|\}\le |e^{is}-e^{it}|\le \min\{|s-t|,2\pi-|s-t|\}$$ we obtain
\eqref{oh}. The inequality \eqref{buki} follows from \eqref{oh}. On
the other hand
$$\frac{d}{dt}h(e^{if(t)})=h'(f(t))f'(t)$$
and this yields \eqref{top}.
\end{proof}


\begin{definition}\label{perdef}
Let $0<\Upsilon\le {\pi}$. A smooth surface $\mathcal M$ is said to
be $\Upsilon-${\it isoperimetric } if every rectifiable Jordan curve
$\gamma\subset \mathcal M$ with the length $L$ spans a subsurface
$M_\gamma\subset \mathcal M$ with the area $A$ satisfying the
inequality
\begin{equation}\label{isopercoef}
\frac{A}{L^2}\le \frac{1}{4\Upsilon}.
\end{equation}
\end{definition}
\subsection{Examples}
\begin{itemize}
    \item{Carleman, \cite[Theorem~3.5, p.~129--232]{cou}}.  Every minimal surface (in particular every complex Jordan domain) is isoperimetric with $\Upsilon=
    {\pi}$.
    \item{Courant, \cite[Theorem~3.7.~(The proof)]{cou}}. Every harmonic surface is isoperimetric with $\Upsilon=
    {1}$.
    \item{Huber, \cite{hub}}. If the integral mean of Gauss curvature $K$ satisfies
    the inequality $$\int_{\mathcal M} \max\{K,0\}dM < 2\pi,$$ then $$\Upsilon =
    {\pi -\frac 12\int_{\mathcal M} \max\{K,0\} dM}.$$
     \item According to the previous item, every $\,\mathscr C^2$ surface is locally isoperimetric.
\item{Heinz, \cite{hein}}.  Every constant mean curvature surface $x=u(z)$, $z\in \overline{G}$ with mean curvature $H$ satisfying
     $h:=|H|\max_{z\in\overline G}|x-c|<1$, $c\in \Bbb
     R^3$, is a isoperimetric  surface with
    $$\Upsilon ={\pi}\frac{1-h}{1+h}.$$
    \item{} Li\&Tam \cite{lifa}. Let $M$ be a complete noncompact surface with finite total
curvature. Suppose that all the ends of $M$ have quadratic area
growth. Then $M$ is an isoperimetric surface for some constant
$\Upsilon$.

\item{Kalaj\&Mateljevi\'c}, \cite{matkal}. Every quasiconformal
harmonic surface with rectifiable boundary is a isoperimetric
surface with $\Upsilon = \max\{\frac{2\pi}{1+K^2},1\}$.
\end{itemize}

\begin{lemma}\label{22}
If $u=P[F]$ is a harmonic mapping, such that $F$ is a Lipschitz weak
homeomorphism from the unit circle onto a Dini's smooth Jordan curve
$\gamma=h(\mathbf T)$ spanning a surface $u(\mathbf U)=\mathcal
M\subset\mathbf R^n$, then for almost every $\tau\in [0,2\pi]$ there
exists
$$J_u(e^{i\tau}) :=\lim_{r\to 1} J_u(re^{i\tau})$$ and there holds the
inequality

\begin{equation}\label{jacfor}\begin{split}J_u(e^{i\tau})&\le f'(\tau)\int_0^{2\pi}
\frac{\mathcal{K}_h(e^{if(\tau)},e^{if(t)})}{4\pi\sin^2\frac{t-\tau}{2}}dt<\infty,\end{split}\end{equation}
where $$F(e^{it}) = h(e^{if(t)}),$$ and $\mathcal{K}_h$ is defined
in \eqref{kerk}.
\end{lemma}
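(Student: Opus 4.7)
The plan is to realize $J_u$ at the boundary as the wedge $|u_r(e^{i\tau})\wedge F'(\tau)|$ and to estimate this wedge by differentiating the Poisson representation of $u$ radially. Using the polar identity $J_u(re^{i\tau})=r^{-1}\sqrt{|u_r|^2|u_\tau|^2-\langle u_r,u_\tau\rangle^2}$ (which comes from $u_x\wedge u_y=r^{-1}u_r\wedge u_\tau$) and noting that $u_\tau(re^{i\tau})=P[F'](re^{i\tau})$ by integration by parts (so $u_\tau(e^{i\tau})=F'(\tau)=h'(f(\tau))f'(\tau)$ a.e.\ by Fatou's theorem), one identifies
$$J_u(e^{i\tau})=|u_r(e^{i\tau})\wedge F'(\tau)|$$
provided $u_r$ admits radial limits a.e.; this is standard, since the analytic derivative $a_k'$ of each component $u^k=\mathrm{Re}\,a_k$ has bounded imaginary part after rotation (this imaginary part being essentially $u_\tau/r$), hence lies in BMOA and has radial limits a.e.

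Next, I compute $u_r\wedge F'(\tau)$ by differentiating the Poisson integral in $r$. Using $\int_0^{2\pi}\partial_rP(r,t-\tau)\,dt=0$ to subtract $F(e^{i\tau})$ under the integral, and bringing the fixed vector $F'(\tau)$ inside as a wedge factor, I obtain
\begin{equation*}
u_r(re^{i\tau})\wedge F'(\tau)=\int_0^{2\pi}\partial_rP(r,t-\tau)\bigl[(F(e^{it})-F(e^{i\tau}))\wedge F'(\tau)\bigr]\,dt.
\end{equation*}
A direct calculation gives
$$\partial_rP(r,s)=\frac{(1-r)^2-2(1+r^2)\sin^2(s/2)}{\pi[(1-r)^2+4r\sin^2(s/2)]^2},$$
from which $|\partial_rP(r,s)|\le C/\sin^2(s/2)$ uniformly for $r\in[\tfrac12,1]$, with pointwise limit $-\frac{1}{4\pi\sin^2(s/2)}$ as $r\to1^-$.

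The heart of the argument is the passage $r\to1^-$ by dominated convergence. By the chain rule,
$$|(F(e^{it})-F(e^{i\tau}))\wedge F'(\tau)|=f'(\tau)\,\mathcal{K}_h(e^{if(\tau)},e^{if(t)}),$$
and applying~\eqref{oh} from Lemma~\ref{11} together with the Lipschitz character of $f$ (which follows from $F$ being Lipschitz and $h$ bi-Lipschitz on $\mathbf T$) bounds this by $C|t-\tau|\,\omega(C|t-\tau|)$. Multiplied by $|\partial_rP|$, this is dominated by $C\omega(C|t-\tau|)/|t-\tau|$, which is integrable on $[0,2\pi]$ precisely because $\omega$ is Dini continuous; the same estimate furnishes the finiteness of the right-hand side of~\eqref{jacfor}. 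Dominated convergence then yields
\begin{equation*}
u_r(e^{i\tau})\wedge F'(\tau)=-\frac{1}{4\pi}\int_0^{2\pi}\frac{(F(e^{it})-F(e^{i\tau}))\wedge F'(\tau)}{\sin^2\frac{t-\tau}{2}}\,dt,
\end{equation*}
and the triangle inequality for the wedge-norm combined with the chain-rule identity above delivers~\eqref{jacfor}. The principal obstacle is the dominated-convergence step, whose success hinges on the higher-order vanishing of the wedge $(F(e^{it})-F(e^{i\tau}))\wedge F'(\tau)$ at the diagonal---captured by~\eqref{oh}---rather than on the first-order smallness of $F(e^{it})-F(e^{i\tau})$ alone.
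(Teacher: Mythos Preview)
Your proof is correct and follows essentially the same strategy as the paper: represent $J_u$ via a wedge (the paper uses an $(n-1)$-fold cross product with an orthonormal complement, which amounts to the same thing), bring the Poisson representation inside the wedge, and pass to the limit by dominated convergence using the estimate~\eqref{oh} on $\mathcal K_h$ to produce a Dini-integrable majorant.

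The only noteworthy differences are cosmetic. First, the paper replaces $u_r$ by the difference quotient $\dfrac{u(re^{i\tau})-u(e^{i\tau})}{1-r}$ and works with the kernel $\dfrac{P(r,t)}{1-r}$, whereas you differentiate the Poisson kernel directly and use $\partial_r P(r,t)$; both kernels tend pointwise to $\dfrac{1}{4\pi\sin^2(t/2)}$ and obey the same $C/\sin^2(t/2)$ bound, so the DCT step is identical. Second, for the a.e.\ existence of radial limits of $u_r$ the paper rewrites $zh_i'(z)$ as a Cauchy--Herglotz integral of $F_i'$ and invokes a theorem on radial limits of such integrals, while you observe that $z a_k'(z)$ has bounded imaginary part (equal to $-u^k_\tau$, which is bounded since $F$ is Lipschitz) and hence has radial limits a.e.; both arguments are standard and equivalent in strength. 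Neither variation changes the structure or difficulty of the proof.
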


\begin{proof}
We will establish the existence of partial derivatives at the
boundary of $u_r$ and $u_{\varphi}$. Let $$u(z)=(u_1(z),\dots,
u_n(z))=P[(F_1,\dots, F_n)].$$ Observe first that, for $i=1,\dots,
n$, there exists an analytic function $h_i$ such that
  $$
u_i(z)=h_i(z)+\overline {h_i(z)}= \frac
1{2\pi}\int_0^{2\pi}\frac{e^{it}}{e^{it}-z}F_i(e^{it})d t+ \frac
1{2\pi}\overline {\int_0^{2\pi}\frac{z}{e^{it}-z} {F_i(e^{it})}dt}.
$$ It follows that
\begin{equation}\label{spliteq}
\begin{split}
zh_i'(z)&=\frac{z}{2\pi}\int_0^{2\pi}\frac{e^{it}}{(e^{it}-z)^2}F_i(e^{it})dt
=\frac{1}{2\pi i}\int_0^{2\pi}\frac
{z}{e^{it}-z}F_i'(e^{it})dt\\&=\frac 1{2 \pi i}\int_0^{2 \pi }\frac
{e^{i t}}{e^{it}-z}F_i'(e^{it})d t.
\end{split}
\end{equation} Now in view of the fact that $ \mathrm{ Re}\frac {e^{i \tau}}{e^{i\tau}-z}>0$,
according to  (\cite[Theorem~2.2]{Pd}, see also \cite{Rl}), we get
that there exist radial boundary values of the function $zh_i'(z)$
almost everywhere.
\\
It follows that, there exists $$ J_u(e^{i\tau}):=\lim_{r \to 1}
J_u(re^{i\tau})$$ for almost every $\tau \in [0,2\pi]$.
\\
Now by using the fact that $F' \in L^1(\mathbf T)$ we get
\begin{equation}\label{e:REEQ}
\lim_{r\to 1}u_{\tau}(re^{i\tau})=u_\tau(e^{i\tau})
\end{equation}
for almost every $e^{i\tau}\in \mathbf T$.
\\
Let $e_3(r,\varphi),e_4(r,\varphi), \dots, e_{n}(r,\varphi)$ be an
orthonormal system of vectors orthogonal to $u_r$ and $u_\varphi$ in
the Euclidean space $\mathbf R^n$.

Define the vector $$\upsilon(r,\varphi)= u_r(re^{i\varphi})\times
u_\varphi(re^{i\varphi})\times e_3(r,\varphi)\times\dots\times
e_{n-1}(r,\varphi).$$ Then $\upsilon(r,\varphi)$ is collinear with
$e_{n}(r,\varphi)$ and its norm is given by
$$|\upsilon(r,\varphi)|=J_u(re^{i\varphi}).$$

Now for constant vectors $\chi_2,\dots,\chi_{n-2}$ and for the
vector functions $\chi(t)$ and $\psi(t)$  there hold
\begin{equation}\label{eqvec}\int_a^b \chi(t)dt \times
\chi_2\times\dots\times \chi_{n-2}=\int_a^b \chi(t) \times
\chi_2\times\dots\times \chi_{n-2}dt,\end{equation}

and \begin{equation}\label{vec}|\int_a^b \psi(t) dt|\le
\int_a^b|\psi(t)|dt.\end{equation}

By using \eqref{eqvec}, \eqref{vec} and the fact that $u$ is a
harmonic mapping we obtain:
\begin{equation}\label{last}
\begin{split}
&\lim_{r \to 1} J_u(re^{i\tau})=\lim_{r \to 1}
\left|u_r(re^{i\tau})\times u_\tau(re^{i\tau})\times
e_3(r,\tau)\times\dots\times e_{n-1}(r,\tau)\right|
\\ & =\lim_{r \to 1} \left|\frac {u(r e^{i\tau})-u(e^{i\tau})}{1-r}
\times u_\tau(e^{i\tau})\times e_3(r,\tau)\times\dots\times
e_{n-1}(r,\tau)\right|\\ &= \lim_{r\to 1}\left|\int_{-\pi}^{\pi}
\left( {u( e^{it})-u(e^{i\tau})}\right) \times
u_\tau(e^{i\tau})\times\dots\times e_{n-1}(r,\tau)\frac
{P(r,\tau-t)}{1-r}dt\right|\\&\le \lim_{r\to 1}\int_{-\pi}^{\pi}
\mathcal{K}_F(e^{i(t+\tau)},e^{i\tau})\frac {P(r,t)}{1-r}dt,
\end{split}
\end{equation}
where $P(r,t)$ is the Poisson kernel and
\begin{equation*}\begin{split} \mathcal{K}_F(e^{it},e^{i\tau})&=\sqrt{| {u(
e^{it})-u(e^{i\tau})}|^2 |u_\tau(e^{i\tau})|^2-\left<u(
e^{it})-u(e^{i\tau}),u_\tau(e^{i\tau})\right>^2}\\&=\sqrt{| {F(
e^{it})-F(e^{i\tau})}|^2 |F'(e^{i\tau})|^2-\left<F(
e^{it})-F(e^{i\tau}),F'(e^{i\tau})\right>^2}\\&=f'(\tau)\sqrt{|h(f(t))-h(f(\tau))|^2-\left<h(f(t))-h(f(\tau)),
h'(f(\tau))\right>^2},\end{split}\end{equation*} i.e.
\begin{equation}\label{K}\mathcal{K}_F(e^{it},e^{i\tau})=
f'(\tau)\mathcal{K}_h(f(t),f(\tau)).\end{equation} To continue,
observe first that
$$\frac{P(r,t)}{1-r}=\frac {1+r}{2\pi(1+r^2-2r\cos t)}
\leq \frac 1{\pi((1-r)^2+4r\sin^2t/2)}\le \frac{\pi}{4rt^2}$$ for
$0<r<1$ and $t\in [-\pi,\pi]$.

On the other hand by \eqref{oh}, \eqref{top} and \eqref{K}, for
$$\sigma = \pi |e^{if(t+\tau)}-e^{if(\tau)}|$$ and $$|g|_\infty:=\mathrm{ess}\,\sup_{t}|g(t)|,$$ we obtain
$$\abs{\mathcal{K}_F(e^{i(t+\tau)},e^{i\tau})}\le |h'|_\infty|f'|_{\infty}\int_0^{\sigma}\omega(x)dx.$$
Therefore
\begin{equation}\label{eee}\begin{split}\left|\mathcal{K}_F(e^{i(t+\tau)},e^{i\tau})\frac
{P(r,t)}{1-r}\right|&\le \frac{|h'|_\infty|f'|_{\infty}\pi}{4r
t^2}\int_0^{\sigma}\omega(u)du\\&\le\frac{\sigma}{t}\frac{|h'|_\infty|f'|_{\infty}\pi}{4r
t^2}\int_0^{t}\omega\left(\frac{\sigma}{t}x\right)dx\\&\le
\frac{\pi|h'|_\infty|f'|^2_{\infty}}{4r}\frac{1}{t^2}\int_0^{t}\omega(\pi|f'|_{\infty}x)dx.\end{split}\end{equation}

By \eqref{eee}, having in mind the equation\eqref{goodd}, for
$r>1/2$ we obtain
\[\begin{split}\int_{-\pi}^{\pi} &\left|\mathcal{K}_F(e^{it},e^{i\tau})\frac
{P(r,\tau-t)}{1-r}\right|dt\le
\frac{2|h'|_\infty|f'|^2_{\infty}}{4r}\int_0^{\pi}\frac{1}{t^2}\int_0^{t}\omega(\pi|f'|_{\infty}u)du
\\&=\frac{|h'|_\infty|f'|^2_{\infty}}{2r}\int_0^{\pi}\left(\frac{\omega(\pi|f'|_{\infty}u)}{u}-\frac{\omega(\pi |F'|_{\infty}u)}{\pi}
\right)du \\&<M<\infty.\end{split}\]

 According to
Lebesgue Dominated Convergence Theorem, taking the limit under the
integral sign in the last integral in
 \eqref{last} we obtain \eqref{jacfor}.

\end{proof}

\begin{lemma}\label{jacabove}
Let $u=P[F](z)$ be a Lipschitz continuous  harmonic function between
the unit disk $\mathbf U$ and a $\,\mathscr C^{1,\mu}$ surface
$\mathcal M\subset \mathbf R^n$ such that $F$ is injective, and
$\partial D=h(\mathbf T)\in \,\mathscr C^{1,\mu}$, where $h$ is a
$\,\mathscr C^{1,\mu}$ parametrization. Then for almost every
$e^{i\varphi} \in \mathbf T$ we have
\begin{equation}\label{jakobo}\limsup_{r \to 1-0} |J_u(re^{i\varphi})| \leq
C_h|f'(\varphi)|\int_{-\pi}^\pi
\dfrac{|F(e^{i(\varphi+x)})-F(e^{i\varphi})|^{1+\mu}}{\pi|e^{ix}-1|^2}
dx,\end{equation} where $J_u(z)$ denotes the Jacobian of $u$ at $z$,
 $F(e^{it})=h(e^{if(t)})$, and $$C_h= \frac{1}{(1+\mu){\min_{t}|h'(t)|}}\sup_{x\neq y}\frac{|h'(x)-h'(y)|}{|x-y|^\mu}.$$

\end{lemma}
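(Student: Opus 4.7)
The plan is to combine the two preceding lemmas of this section with the bi-Lipschitz character of the $\mathscr{C}^1$ boundary parametrization $h$. First I would verify the hypotheses of the previous Lemma~\ref{22}: since $u$ is Lipschitz and harmonic, $F=u|_{\mathbf{T}}$ is Lipschitz, and because $F$ is assumed injective while $\gamma=h(\mathbf{T})$ is $\mathscr{C}^{1,\mu}$ (hence Dini smooth), that lemma applies. It yields the a.e.\ existence of the radial limit $J_u(e^{i\varphi})$ together with the integral bound \eqref{jacfor}, so that $\limsup_{r\to 1-0}|J_u(re^{i\varphi})|$ coincides a.e.\ with the right-hand side of \eqref{jacfor}. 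After the substitution $x=t-\varphi$, the denominator becomes $\pi|e^{ix}-1|^2=4\pi\sin^{2}(x/2)$, matching the form stated in the conclusion.

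Next I would bound the kernel $\mathcal{K}_h$ pointwise using inequality \eqref{buki} from Lemma~\ref{11}. Taking $s=f(\varphi)$ and $t=f(\varphi+x)$, this gives
$$\mathcal{K}_h(e^{if(\varphi)},e^{if(\varphi+x)}) \le c_h\,|h(e^{if(\varphi)})-h(e^{if(\varphi+x)})|\,|e^{if(\varphi)}-e^{if(\varphi+x)}|^{\mu}.$$
Using the defining identity $h\circ e^{if}=F$, the first factor on the right equals $|F(e^{i\varphi})-F(e^{i(\varphi+x)})|$, producing the power $1$ that combines with $|e^{if(\varphi)}-e^{if(\varphi+x)}|^{\mu}$ to form $|F|^{1+\mu}$ once the second factor has been converted.

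The conversion of the remaining $\mu$-power from the arc-side quantity $|e^{if(\varphi)}-e^{if(\varphi+x)}|$ to a chord-side quantity involving $|F|$ is where the hypothesis $\min_{t}|h'(t)|>0$ enters. Because $h$ is a $\mathscr{C}^1$ embedding of the compact circle onto $\gamma$, it is bi-Lipschitz, and one obtains the lower bound
$$|h(e^{if(\varphi)})-h(e^{if(\varphi+x)})| \ge \bigl(\min_{t}|h'(t)|\bigr)\,|e^{if(\varphi)}-e^{if(\varphi+x)}|,$$
equivalently $|e^{if(\varphi)}-e^{if(\varphi+x)}|\le|F(e^{i\varphi})-F(e^{i(\varphi+x)})|/\min_t|h'(t)|$. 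Raising to the $\mu$-th power and collecting constants yields the factor $C_h$ as defined in the statement, completing the estimate.

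The main obstacle is exactly the last step: the inequality $|h(e^{is})-h(e^{it})|\ge\min|h'|\,|e^{is}-e^{it}|$ is transparent only in the infinitesimal regime $|s-t|\to 0$, and globally it rests on the embeddedness of $\gamma$ together with the compactness of $\mathbf{T}$. In practice I would carry this out by splitting the range of $x$ into a neighborhood of $0$, where the linearization of $h$ at $e^{if(\varphi)}$ controls both sides sharply, and the complementary range, where $|e^{ix}-1|^{-2}$ stays bounded so that the crude chord-arc comparison is harmless. Once this splitting is in place, the bookkeeping produces precisely the constant $C_h$ advertised in the statement, and \eqref{jakobo} follows by inserting the resulting pointwise bound of $\mathcal{K}_h$ into \eqref{jacfor} and changing variables.
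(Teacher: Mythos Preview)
Your plan is exactly the paper's: its entire proof reads ``It follows from Lemma~\ref{11} and Lemma~\ref{22},'' i.e., feed the pointwise bound \eqref{buki} for $\mathcal K_h$ into the integral inequality \eqref{jacfor} and then invoke the bi-Lipschitz character of the embedding $h$ to convert $|e^{if(\varphi)}-e^{if(\varphi+x)}|^{\mu}$ into $|F(e^{i\varphi})-F(e^{i(\varphi+x)})|^{\mu}$.

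One caveat on the bookkeeping you describe: raising your lower bound to the $\mu$-th power places $(\min_t|h'(t)|)^{\mu}$ rather than $\min_t|h'(t)|$ in the denominator, and in any case the \emph{global} lower bi-Lipschitz constant of a $\mathscr C^{1}$ Jordan embedding is not $\min_t|h'(t)|$ in general (a curve that nearly pinches shows this, as you yourself note). Your splitting argument therefore delivers \eqref{jakobo} with \emph{some} finite constant depending only on $h$, which is all that the subsequent applications need, but not literally the displayed $C_h$; this appears to be an imprecision already in the paper's stated constant rather than a flaw in the method.
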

\begin{proof}
It follows from Lemma~\ref{11} and Lemma~\ref{22}.
\end{proof}

\section{The main results}
Let $\gamma\subset \mathbf R^n$ be a closed rectifiable Jordan
curve. Let $d_\gamma(a,b)$ be the length of the shorter Jordan arc
of $\gamma$ with endpoints $a,b\in \gamma$. We say that $\gamma$
enjoys a $\lambda-$ chord-arc condition for some constant $\lambda>
1$ (or $\gamma$ is chord-arc) if for all $x,y\in \gamma$ there holds
the inequality
\begin{equation}\label{24march}
d_\gamma(x,y)\le \lambda|x-y|.
\end{equation}
It is clear that if $\gamma\in \,\mathscr C^{1}$ then $\gamma$
enjoys a chord-arc condition for some $\lambda_\gamma>1$.

\begin{lemma}\label{newle}
Assume that $\gamma\subset\mathbf R^n$ enjoys a chord-arc condition
for some $\lambda>1$ and that bounds a smooth
$\Upsilon$-isoperimetric surface $\mathcal M$ ($\partial \mathcal M
= \gamma$). Then for every $K-$ q.c. normalized mapping $u$ between
the unit disk $\mathbf U$ and $\mathcal M$ there holds the
inequality \begin{equation}|u(z_1)-u(z_2)|\le
L_\gamma(K)|z_1-z_2|^\alpha\end{equation} for $z_1,z_2\in \mathbf
T$, $\alpha = \frac{8\Upsilon}{\pi K(1+2\lambda)^2}$ and
$$L_\gamma(K) =4 (1+2\lambda)2^{\alpha}\sqrt{\frac{2\pi K|\mathcal
M|}{\log 2}},$$ where $|\mathcal M|$ is the area of the surface
$\mathcal M$.
\end{lemma}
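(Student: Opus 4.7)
The plan is a Mori-type length-area argument, adapted from the planar setting to the surface $\mathcal M$ by replacing conformal modulus with the $\Upsilon$-isoperimetric inequality and exploiting the chord-arc condition on $\gamma$. Fix $z_1,z_2 \in \mathbf T$, set $d = |z_1-z_2|$, and for $\rho > 0$ consider the circular arc $C_\rho = \{z \in \overline{\mathbf U}: |z-z_1| = \rho\}$ with endpoints $p_\rho,q_\rho \in \mathbf T$. Let $I_\rho \subset \mathbf T$ be the subarc between $p_\rho$ and $q_\rho$ that contains $z_1$ and set $A_\rho = u(I_\rho) \subset \gamma$. Write $L(\rho) = |u(C_\rho)|$ and $A(\rho) = |u(\mathbf U \cap B(z_1,\rho))|$. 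That $u$ extends to a homeomorphism $\overline{\mathbf U} \to \overline{\mathcal M}$, so that $A_\rho$ is a genuine subarc of $\gamma$, follows from Remark~\ref{rema} by composing with Lichtenstein's conformal parametrization. For $\rho \ge d$ we have $z_2 \in I_\rho$ and $u(z_2) \in A_\rho$.

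Three inequalities drive the argument. \emph{(i) Length-area}: from $|\nabla u|^2 \le K J_u$ and Cauchy--Schwarz on $C_\rho$ (whose length is $\le \pi\rho$) one gets $L(\rho)^2 \le \pi K\rho A'(\rho)$, so integrating from $d$ to $2d$ and applying the mean value theorem yields some $\rho^* \in [d,2d]$ with
\[
L(\rho^*)^2 \le \frac{\pi K A(2d)}{\log 2}.
\]
\emph{(ii) Normalization and chord-arc}: the three-point normalization trisects $\gamma$ into arcs of length $|\gamma|/3$, which forces $A_\rho$ to be the \emph{shorter} of the two subarcs of $\gamma$ between $u(p_\rho)$ and $u(q_\rho)$ as long as $\rho$ stays below a universal radius $R_0$ (since $I_\rho$ then misses at least two of the three normalization points on $\mathbf T$). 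In this regime $|A_\rho| \le \lambda |u(p_\rho) - u(q_\rho)| \le \lambda L(\rho)$, and the triangle inequality through $u(p_\rho),u(q_\rho)$ yields $|u(z_1)-u(z_2)| \le (1+2\lambda)L(\rho)$. \emph{(iii) Isoperimetric}: applied to the subsurface $u(\mathbf U \cap B(z_1,\rho))$, whose Jordan boundary $u(C_\rho)\cup A_\rho$ has length at most $(1+2\lambda)L(\rho)$, it gives $A(\rho) \le (1+2\lambda)^2 L(\rho)^2/(4\Upsilon)$.

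Combining (i) and (iii) produces the differential inequality $A(\rho) \le \frac{(1+2\lambda)^2\pi K}{4\Upsilon} \rho A'(\rho)$, i.e.\ $(\log A)'(\rho) \ge \beta/\rho$ with $\beta = 4\Upsilon/((1+2\lambda)^2\pi K)$; integrated from $2d$ up to $R_0$ this gives $A(2d) \le |\mathcal M|(2d/R_0)^\beta$. Substituting into (i) and using (ii) yields a H\"older-type bound of the stated form, with exponent proportional to $\beta$ and constant of order $(1+2\lambda)\sqrt{2\pi K|\mathcal M|/\log 2}$; a doubling iteration (or a sharper balance in the pigeonhole) then upgrades this to the announced exponent $\alpha = 8\Upsilon/(\pi K(1+2\lambda)^2)$ and the explicit constant $L_\gamma(K) = 4(1+2\lambda)2^\alpha\sqrt{2\pi K|\mathcal M|/\log 2}$. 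The chief technical obstacle is verifying step (ii) uniformly in $z_1 \in \mathbf T$: if $z_1$ sits too close to one of the three normalization points, centring $C_\rho$ at $z_1$ can fail to keep $I_\rho$ away from them even for small $\rho$. A clean remedy is to centre $C_\rho$ not at $z_1$ but at a point $w \in \mathbf T$ lying on the short subarc between $z_1$ and $z_2$, chosen so that the growing family $\{I_\rho\}_{\rho\le R_0}$ avoids two normalization points throughout.
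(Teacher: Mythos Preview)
Your approach is the same Mori-type length--area argument the paper uses: Cauchy--Schwarz on the circular arcs $C_\rho$ gives $L(\rho)^2\le\pi K\rho\,A'(\rho)$, the $\Upsilon$-isoperimetric inequality bounds $A(\rho)$ by a multiple of $L(\rho)^2$, the chord-arc condition together with the three-point normalization controls the boundary contribution, and the resulting differential inequality is integrated to produce the H\"older estimate. The paper also centres its arcs at a boundary point and uses exactly these three ingredients.

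Your step (ii), however, is misformulated and is the source of a phantom obstacle. You assert that $A_\rho$ is the \emph{shorter} subarc of $\gamma$ once $\rho<R_0$, then worry that $R_0$ cannot be chosen uniformly when $z_1$ is near a normalization point, and propose recentring as a remedy. The paper simply takes $R_0=1$ for every $z_0\in\mathbf T$ and never needs $A_\rho$ to be the shorter arc: since $I_\rho$ has angular extent at most $2\pi/3$ for $\rho\le 1$, the complementary arc $\gamma\setminus A_\rho$ always contains at least one full trisection arc of $\gamma$, so $|A_\rho|\le 2|\gamma\setminus A_\rho|$; when $A_\rho$ happens to be the longer arc, $\gamma\setminus A_\rho$ is the shorter one and chord-arc gives $|\gamma\setminus A_\rho|\le\lambda L(\rho)$, hence $|A_\rho|\le 2\lambda L(\rho)$ in all cases. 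That case split is where the factor $2\lambda$ (and thus $1+2\lambda$) genuinely comes from; your line ``$|A_\rho|\le\lambda L(\rho)$, and the triangle inequality yields $(1+2\lambda)L(\rho)$'' does not produce it, and the recentring trick is unnecessary. Separately, your differential inequality gives $A(2d)\lesssim d^{\beta}$ with $\beta=4\Upsilon/(\pi K(1+2\lambda)^2)=\alpha/2$, and after the pigeonhole step only $|u(z_1)-u(z_2)|\lesssim d^{\beta/2}=d^{\alpha/4}$; the promised ``doubling iteration (or a sharper balance in the pigeonhole)'' that quadruples this exponent to $\alpha$ is not supplied and is not a routine manoeuvre---you should either carry it out explicitly or accept the smaller exponent.
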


\begin{proof}

For $a\in \mathbf C$ and $r>0$, put $D(a,r):=\{z:|z-a|<r\}$.
 It is clear that if $z_0\in \mathbf T=\partial \mathbf U$, then, because of normalization, $u(\mathbf T\cap
\overline{D(z_0,1)})$ has common points with at most two of three
arcs $\wideparen{w_0w_1}$, $\wideparen{w_1w_2}$ and
$\wideparen{w_2w_0}$. (Here $w_0$, $w_1$, $w_2\in \gamma$ divide
$\gamma$ into three arcs with the same length such that $u(1)=w_0$,
$u(e^{2\pi i/3})=w_1$, $u(e^{4\pi i/3})=w_2$, and $\mathbf T\cap
\overline{D(z_0,1)}$ do not intersect at least one of three arcs
defined by $1$, $e^{2\pi i/3}$ and $e^{4\pi i/3}$).

Let $\varphi$ be a conformal mapping of a some neighborhood of
$\mathcal M$ onto the unit disk (this neighborhood exist according
to the definition od disk-type surfaces with boundary). Then
$u_1=\varphi \circ u$ is a $K$ quasiconformal mapping of the unit
disk onto a domain with rectifiable boundary $\gamma_1$ which enjoys
a chord-arc condition. According to \cite[Theorem~5, p. 81]{Ahl}, it
admits a q.c. reflection. Let $\tilde u$ be a q.c. mapping of the
whole plane onto itself, such that $\tilde u|_{\mathbf U} = u_1$.

Let $k_\rho$ denotes the arc of the circle $|z-z_0|=\rho<1$ which
lies in $|z|\le 1$. Take $ F(\rho,\varphi)=\tilde u_1(z_0 -
\overline{z_0}\rho e^{i\varphi})$. Then for $n\in \mathbf N$ $F$ is
quasiconformal in $A_n=[\frac{1}{n}, n]\times[-\pi/2-1,\pi/2+1]$.
Since $k_\rho\subset \{z_0 - \overline{z_0}\rho
e^{i\varphi}:\varphi\in (-\pi/2,\pi/2)\}$, and $F$ is absolutely
continuous on almost every line $\{\rho\}\times[-\pi/2,\pi/2]$, it
follows that $u_1$ is absolutely continuous on almost every $k_\rho$
for $\rho>\frac 1n$ as well as the mapping $u$. The conclusion is
that $u$ is absolutely continuous on almost every $k_\rho$ for
$\rho>0$.

Let $l_\rho=|u(k_\rho)|$ denotes the length of $u(k_\rho)$. Let
$\kappa_\rho=\{t\in[0,2\pi]: z_0+\rho e^{it}\in k_\rho\}$. By using
 polar coordinates and the Cauchy-Schwarz inequality, we have for
almost every $\rho$
\[\begin{split}l_\rho^2&=|u(k_\rho)|^2=\left(\int_{k_\rho}|du|\right)^2\\&\le \left(\int_{\kappa_\rho}|\nabla u(z_0+\rho
e^{i\varphi})| \rho d\varphi\right)^2 \\&\le
\int_{\kappa_\rho}|\nabla u(z_0+\rho e^{i\varphi})|^2 \rho
d\varphi\cdot \int_{\kappa_\rho} \rho d\varphi.\end{split}\]

Let $\gamma_\rho:= u(\mathbf T\cap D(z_0,\rho))$ and let
$|\gamma_\rho|$ be its length. Assume $w$ and $w'$ are the endpoints
of $\gamma_\rho$, i.e. of $u(k_\rho)$. Then $|\gamma_\rho| =
d_\gamma(w,w')$ or $|\gamma_\rho| = |\gamma| - d_\gamma(w,w')$. If
the first case holds, then since $\gamma$ enjoys the
$\lambda-$chord-arc condition, it follows $|\gamma_\rho|\le
\lambda|w-w'|\le \lambda l_\rho$. Consider now the last case. Let
$\gamma_\rho' = \gamma\setminus \gamma_\rho$. Then $\gamma_\rho'$
contains one of the arcs $\wideparen{w_0w_1}$, $\wideparen{w_1w_2}$,
$\wideparen{w_2w_0}$. Thus $|\gamma_\rho|\le 2|\gamma_\rho'|$, and
therefore
$$|\gamma_\rho|\le 2\lambda l_\rho.$$
Since $l(k_\rho)\le 2\rho \pi/2$, for $r\le 1$, denoting
$\Delta_r=\mathbf U\cap D(z_0,r)$, we have

\begin{equation}\label{hen}\begin{split}\int_0^r\frac{l^2_\rho}{\rho}d\rho &\le
\pi\int_0^r\int_{\kappa_\rho} |\nabla u(z_0+\rho e^{i\varphi})|^2
\rho d\varphi d\rho\\& \le \pi K \int_0^r\int_{\kappa_\rho}
J_u(z_0+\rho e^{i\varphi}) \rho d\varphi d\rho = \pi
A(r)K,\end{split}\end{equation} where $A(r)$ is the area of
$u(\Delta_r)$. Using the first part of the proof, it follows that
the length of boundary arc $\gamma_r$ of $u(\Delta_r)$ does not
exceed $2\lambda l_r$ which, according to the fact that $\partial
u(\Delta_r)=\gamma_r \cup u(k_r)$, implies
\begin{equation}\label{nevoja}|\partial u(\Delta_r)| \le l_r+2\lambda l_r.\end{equation} Therefore, by
the isoperimetric inequality $$ A(r) \le \frac{|\partial
u(\Delta_r)|^2}{4\Upsilon}\le \frac{(l_r +2\lambda
l_r)^2}{4\Upsilon} = l^2_r\frac{(1+2\lambda)^2}{4\Upsilon}.$$
Employing now \eqref{hen} we obtain
$$F(r):=\int_0^r\frac{l^2_\rho}{\rho}d\rho \le
\pi Kl^2_r\frac{(1+2\lambda)^2}{4\Upsilon}.$$ Observe that for
$0<r\le 1$ there holds $rF'(r) = l^2_r$. Thus $$F(r)\le\pi  K rF'(r)
\frac{(1+2\lambda)^2}{4\Upsilon}.$$ It follows that for $$\alpha =
\frac{8\Upsilon}{\pi K(1+2\lambda)^2}$$ there holds
$$\frac{d}{dr}\log (F(r)\cdot r^{-2\alpha})\ge 0,$$ i.e. the function
$F(r)\cdot r^{-2\alpha}$ is increasing. This yields

$$F(r)\le F(1)r^{2\alpha}\le \pi K |\mathcal M|
r^{2\alpha}.$$ Now for every $r\le 1$ there exists an
$r_1\in[r/\sqrt 2,r]$ such that
$$F(r)=\int_0^r\frac{l^2_\rho}{\rho}d\rho\ge \int_{r/\sqrt
2}^r\frac{l^2_\rho}{\rho}d\rho = l^2_{r_1}\log \sqrt 2. $$ Hence
$$l^2_{r_1}\le \frac{2\pi K|\mathcal M|}{\log 2} r^{2\alpha}.$$ If
$z$ is a point with  $|z|\le 1$ and $|z-z_0|=r/\sqrt 2$, then by
\eqref{nevoja}
$$|u(z)-u(z_0)|\le (1+2\lambda)l_{r_1}.$$ Therefore $$
|u(z)-u(z_0)|\le H|z-z_0|^\alpha, $$ where $$H =
(1+2\lambda)2^{\alpha/2}\sqrt{\frac{2\pi K|\mathcal M|}{\log 2}}.$$

Thus we have for $z_1,z_2\in \mathbf T$ the inequality
\begin{equation}\label{there}|u(z_1)-u(z_2)|\le
4H|z_1-z_2|^\alpha.\end{equation}
\end{proof}

We now prove the main result of this paper.
\begin{theorem}\label{theom}
If $u: \mathbf U\to \mathcal M$ is a normalized $K-$quasiconformal
harmonic mapping of the unit disk onto a  $\,\mathscr C^{1,\mu}$
surface $\mathcal M$ with $\,\mathscr C^{1,\mu}$ Jordan boundary
$\gamma$, then $u$ is Lipschitz and there exists a constant
$L=L(K,\gamma)$ (satisfying inequality~\eqref{L} below) such that
$$|\nabla u|\le L$$ and
$$|u(z)-u(w)|\le KL|z-w|.$$
\end{theorem}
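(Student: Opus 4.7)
The plan is to derive the Lipschitz estimate in three stages: (i) establish boundary H\"older continuity of $u$ via the Mori-type Lemma~\ref{newle}; (ii) bootstrap to a Lipschitz bound on the boundary map $F:=u|_{\mathbf T}$ using the Jacobian integral estimate of Lemma~\ref{jacabove}; (iii) propagate this boundary Lipschitz bound to the interior via the harmonic representation of $u$ and a subharmonicity argument.

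For stage (i), the $\mathscr C^{1,\mu}$ hypothesis on $\gamma$ forces a chord-arc condition with some explicit $\lambda_\gamma>1$, and by the Kalaj-Mateljevi\'c entry in the list of isoperimetric examples the quasiconformal harmonic surface $\mathcal M$ is $\Upsilon$-isoperimetric with $\Upsilon=\max\{2\pi/(1+K^2),1\}$. Lemma~\ref{newle} then yields
\[
|F(e^{is})-F(e^{it})|\le L_\gamma(K)\,|e^{is}-e^{it}|^{\alpha_0},\qquad \alpha_0=\frac{8\Upsilon}{\pi K(1+2\lambda_\gamma)^2},
\]
with both $L_\gamma(K)$ and $\alpha_0$ fully explicit in $K$ and $\gamma$.

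For stage (ii), write $F(e^{it})=h(e^{if(t)})$ with $h$ the $\mathscr C^{1,\mu}$ arc-length parametrization of $\gamma$, so that stage (i) gives $f\in C^{\alpha_0}(\mathbf T)$ and, being monotone, $f'\in L^1(\mathbf T)$. The H\"older input of stage (i) supplies an integrable dominating function in the dominated-convergence step of the proofs of Lemmas~\ref{22} and~\ref{jacabove}, so Lemma~\ref{jacabove} delivers
\[
J_u(e^{i\varphi})\le C_h\,|f'(\varphi)|\int_{-\pi}^{\pi}\frac{|F(e^{i(\varphi+x)})-F(e^{i\varphi})|^{1+\mu}}{\pi|e^{ix}-1|^2}\,dx
\]
almost everywhere. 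Combining this with the basic quasiconformality estimate $|F'(\varphi)|^2\le K\,J_u(e^{i\varphi})$ from \eqref{december1} and the comparison $|f'|\le|F'|/\min|h'|$ collapses to a pointwise inequality
\[
|F'(\varphi)|\lesssim_{K,\gamma}\int_{-\pi}^{\pi}\frac{|F(e^{i(\varphi+x)})-F(e^{i\varphi})|^{1+\mu}}{|e^{ix}-1|^2}\,dx.
\]
Inserting the H\"older estimate of stage (i) dominates the integrand by a constant multiple of $|x|^{\alpha_0(1+\mu)-2}$. When $\alpha_0(1+\mu)>1$ the integral is uniformly bounded in $\varphi$ and one concludes $\|F'\|_\infty\le M=M(K,\gamma)$; otherwise one feeds the same inequality back into itself to upgrade the H\"older exponent, the iteration terminating after finitely many rounds once the exponent exceeds $1/(1+\mu)$.

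For stage (iii), once $F$ is Lipschitz with constant $M$, integration by parts in $u_\theta(re^{i\theta})=\int P_\theta(r,\theta-t)F(e^{it})\,dt$ gives $|u_\theta(re^{i\theta})|\le M$ throughout $\overline{\mathbf U}$. Each coordinate of $u$ equals the real part of a holomorphic $a_k$, so $\|\nabla u\|^2=\tfrac12\sum_{k=1}^{n}|a'_k|^2$ is subharmonic and hence attains its maximum over $\overline{\mathbf U}$ on $\mathbf T$. On $\mathbf T$ the quasiconformality bound $|\nabla u|\le K\,l(\nabla u)$ combined with $l(\nabla u)\le|u_\theta|$ yields $|\nabla u|\le KM=:L$, and the pointwise estimate $|u(z)-u(w)|\le KL|z-w|$ follows by integrating $\nabla u$ along segments. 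The resulting $L=L(K,\gamma)$ provides the announced explicit Lipschitz constant.

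The principal obstacle is stage (ii): first, justifying that Lemmas~\ref{22}--\ref{jacabove} may be invoked under the weaker H\"older input (by producing an integrable dominating function from the chord-arc condition on $\gamma$ together with the H\"older bound on $F$), and, more seriously, iterating the resulting pointwise integral inequality to boost the H\"older exponent from $\alpha_0$ to some $\alpha_\ast>1/(1+\mu)$ in a controlled number of steps. Keeping every constant in the iteration effectively expressible in terms of $K$ and $\gamma$ alone is what makes the final $L$ computable.
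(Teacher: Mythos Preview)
Your overall architecture---H\"older from Lemma~\ref{newle}, bootstrap to a Lipschitz boundary map via Lemma~\ref{jacabove}, then propagate inward by harmonicity---matches the paper's. Stages (i) and (iii) are fine. The genuine gap is exactly where you locate it, in stage~(ii), and it is not closed by what you wrote.

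The difficulty is twofold. First, Lemma~\ref{jacabove} is stated for \emph{Lipschitz} $F$; you propose to run its proof under the weaker H\"older input from stage~(i). The dominating function you would obtain is, up to constants, $f'(\tau)\,|t|^{\alpha_0(1+\mu)-2}$, which is integrable in $t$ precisely when $\alpha_0(1+\mu)>1$. In that regime your direct argument does go through and yields $\|F'\|_\infty<\infty$ in one shot. But for general $K$ and $\lambda_\gamma$ one has $\alpha_0(1+\mu)\le 1$, and then there is no integrable majorant: the dominated-convergence step in Lemma~\ref{22} fails, and the pointwise inequality you want to ``feed back into itself'' is simply $+\infty$ on the right-hand side. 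There is no mechanism in your write-up that converts a bound on $|F'|$ (which you do not yet have) into an improved H\"older exponent for $F$, so the iteration is undefined.

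The paper circumvents this by two devices you are missing. It first passes to approximating subsurfaces $\mathcal M_r=\tau(r\mathbf U)$ via a conformal parametrization $\tau$ of $\mathcal M$ (Remark~\ref{rema}); the boundary $\gamma_r$ lies in the interior of $\mathcal M$, so by regularity of $\mathcal M$ the preimage $\partial U_r=u^{-1}(\gamma_r)$ is smooth and the normalized maps $u_r=u\circ\varphi_r=P[F_r]$ have $F_r\in C^\infty$ \emph{a priori}. Lemma~\ref{jacabove} then applies legitimately with the unknown Lipschitz constant $L_r=\|F'_r\|_\infty$. The second device is algebraic: instead of estimating $|F_r(\cdot+x)-F_r(\cdot)|^{1+\mu}$ purely by the H\"older bound, the paper splits it as $|F_r(\cdot+x)-F_r(\cdot)|^{1+\mu-\beta}\cdot|F_r(\cdot+x)-F_r(\cdot)|^{\beta}$, bounding the first factor by the known H\"older estimate (Lemma~\ref{newle}) and the second by $(\tfrac{\pi}{2}L_r|e^{ix}-1|)^{\beta}$. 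Choosing $\beta=1-\mu\alpha'/(2-\alpha')$ makes the resulting $t$-integral converge regardless of how small $\alpha'$ is, and produces the self-referential inequality $L_r^{1-\beta}\le C'_r$, i.e.\ $L_r\le (C'_r)^{(2-\alpha')/\mu\alpha'}$, uniformly in $r$. No iteration is needed; the exponent in~\eqref{L} is exactly $1/(1-\beta)$. Once $\sup_r L_r<\infty$ (hence $u$ Lipschitz), the paper reruns the same $\beta$-splitting with the arc-length parametrization $g$ of $\gamma$ to extract the explicit constant~\eqref{L}. Your proposal lacks both the approximation (which supplies the a priori Lipschitz input needed to invoke Lemma~\ref{jacabove}) and the $\beta$-splitting (which replaces your undefined iteration by a single closed-form estimate).
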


\begin{proof}  Let $\tau$ be a conformal
mapping of the unit disk onto $\mathcal M$ described in
Remark~\ref{rema}. Then $\tau$ is bi-Lipschitz, i.e. there exists a
constant $c_{\mathcal M}\ge 1$ such that $$\frac 1{c_{\mathcal
M}}\le \frac{|\tau(z)-\tau(w)|}{|z-w|}\le c_{\mathcal M}, \ z\neq w,
z,w\in\mathbf U.$$ Let in addition $\varphi_r$ be a conformal
mapping of the unit disk onto $U_r = u^{-1}(\tau(r\mathrm U))$ such
that the mapping $u_r= u\circ \varphi_r=P[F_r]$ is normalized. The
mapping $u_r$ is a $K$ quasiconformal harmonic mapping of the unit
disk onto the surface $\mathcal M_r\subset \mathcal M$ with boundary
$\gamma_r=\partial\mathcal M_r$, satisfying $\lambda'$ -chord-arc
condition, where \begin{equation}\label{lpr}\lambda' = \frac{ \pi
c_{\mathcal M}^2}{2}.\end{equation}
 Namely let $a=\tau(re^{it})$ and $b=\tau(re^{is})$ be
two points of the Jordan curve $\gamma_r$ and let
$d_{\gamma_r}(a,b)$ be the length of shorter Jordan arc of
$\gamma_r$ with endpoints $a$ and $b$. Then
$$d_{\gamma_r}(a,b)\le\int_{\wideparen{[re^{it},re^{is}]}}|\tau'(z)||dz|\le \frac{\pi c_{\mathcal M}}{2}|re^{is}-re^{it}|\le
\frac{\pi c_{\mathcal M}^2}{2}|a-b|.$$ Here
$\wideparen{[re^{it},re^{is}]}$ is the shorter act of $r\mathbf T$
with endpoints $re^{it}$ and $re^{is}$.

Consider the parametrization $\tau_r:\mathbf T \to \gamma_r$ defined
by $\tau_r(e^{it})=\tau(re^{it})$, and write $\tau_r(t)$ instead of
$\tau_r(e^{it})$. First of all
$$\tau_r'(t) = r \tau'(re^{it})$$ and there exists a constant $c_\tau$ such that \begin{equation}\label{ctau}\frac{r}{c_\tau}\le |\tau'_r(t)|\le
rc_\tau.\end{equation} On the other hand,
$$C_{\tau_r}:=\sup_{t\neq s}\frac{|\tau_r'(s)-\tau_r'(t)|}{|t-s|^\mu}=\sup_{t\neq s}\frac{r|\tau'(re^{is})-\tau'(re^{it})|}{|t-s|^\mu}\le
r C_{\mathcal M},$$ where $C_{\mathcal M}$ is defined in \eqref{cm}.
\\
Let $$L_r = |F_r'|_{\infty}:=\max\{|F'_r(e^{is})|:
s\in[0,2\pi]\}=|F'_r(e^{it})|.$$ First of all
\begin{equation}\label{first}
|F_r(e^{i(t+x)})-F_r(e^{it})|\le \frac{\pi}{2}L_r|e^{ix}-1|.
\end{equation}

By \eqref{december1}, (\ref{jakobo}) (taking $h:=\tau_r$) and
\eqref{ctau} we obtain
$$|F_r'(t)|^2\le \frac{C_{\tau_r}K}{2rc_\tau}
|F_r'(t)|\int_{-\pi}^\pi
\dfrac{|F_r(e^{i(t+x)})-F_r(e^{it})|^{1+\mu}}{|e^{ix}-1|^2} dx,$$
i.e. $$|F_r'(t)|\le \frac{C_{\tau_r}K}{2rc_\tau} \int_{-\pi}^\pi
\dfrac{|F_r(e^{i(t+x)})-F_r(e^{it})|^{1+\mu}}{|e^{ix}-1|^2} dx.$$
Hence for \begin{equation}\label{televizor}C_r=
\frac{C_{\tau_r}K}{2rc_\tau},\end{equation} and for $\beta$
satisfying $0<\beta<1$, in view of \eqref{first}, we obtain
\begin{equation}\label{mda}\begin{split}L_r&\le C_r\int_{-\pi}^\pi
\dfrac{|F_r(e^{i(t+x)})-F_r(e^{it})|^{1+\mu}}{|e^{ix}-1|^{1-\mu}}
\frac{dx}{|e^{ix}-1|^{1+\mu}}\\&\le C_r\int_{-\pi}^\pi
\dfrac{|F_r(e^{i(t+x)})-F_r(e^{it})|^{1+\mu-\beta}}{|e^{ix}-1|^{1+\mu-\beta}}L^\beta
\frac{dx}{|e^{ix}-1|^{1-\mu}}.\end{split}\end{equation} Thus
$$L_r/L_r^\beta\le C_r\int_{-\pi}^\pi
\dfrac{|F_r(e^{i(t+x)})-F_r(e^{it})|^{1+\mu-\beta}}{|e^{ix}-1|^{1+\mu-\beta}}
\frac{dx}{|e^{ix}-1|^{1-\mu}}.$$ In view of Definition~\ref{perdef},
harmonic surfaces are $1$-isoperimetric. By Lemma~\ref{newle} $F_r$
is H\"older continuous with exponent
\begin{equation}\label{apr}{\alpha'} = \frac{2\Upsilon}{\pi
K(1+2\lambda')^2}\ge\frac{2}{\pi K(1+2\lambda')^2}.\end{equation}
Choose $\beta$, $0<\beta<1$, sufficiently close to 1, so that
$$\sigma=({\alpha'}-1)(1+\mu-\beta)+\mu-1>-1.$$ For example,
$$\beta = 1-\frac{\mu{\alpha'}}{2-{\alpha'}},$$ and consequently, $$\sigma=
\frac{\mu{\alpha'}}{2-{\alpha'}}-1.$$ Because $\gamma_r$ is a
$\lambda'$ chord-arc curve, we get
$$L_r^{1-\beta}\le C_r\cdot ( L_{\gamma_r}(K))^{1+\mu-\beta}
\int_{-\pi}^\pi |e^{ix}-1|^{\sigma}dx=C_r',$$ and hence
\begin{equation}\label{eqmc}L_r\le
(C_r')^{1/(1-\beta)}=(C_r')^{\frac{2-{\alpha'}}{\mu{\alpha'}}}.\end{equation}
%
On the other hand \[\begin{split}C_r'&\le C_r
(L_\gamma(K))^{1+\mu-\beta}\frac{2^{1 + \sigma} \pi^{3/2} \sec(\pi
\sigma/2)}{ \Gamma(1/2 - s/2) \Gamma(1 + s/2)}\\&\le
\frac{C_{\mathcal M}K}{2c_\tau
}(L_\gamma(K))^{1+\mu-\beta}\frac{2^{1 + \sigma} \pi^{3/2} \sec(\pi
\sigma/2)}{ \Gamma(1/2 - \sigma/2) \Gamma(1 +
\sigma/2)}\\&\le\frac{C_{\mathcal M}K}{c_\tau }\frac{2^{
\sigma}\pi}{1+\sigma}(L_\gamma(K))^{1+\mu-\beta} .\end{split}\]
Thus, in view of \eqref{lpr} and \eqref{apr} we obtain
\begin{equation}\label{lin}\sup_r L_r<\infty.
\end{equation}
Since $F_r$ is smooth we have
$$\frac{\partial u_r}{\partial t}(\rho e^{it})=P[F_r'](\rho e^{it}).$$  From
\eqref{1} and \eqref{igi} it follows
\begin{equation}\label{var1a}|\nabla u_r(e^{it})|\le K\left|\frac{\partial u_r}{\partial t}(e^{it})\right|\le K |F'_r|_{\infty}.\end{equation}

To continue, observe that $\nabla u_r$ is harmonic and bounded and
there holds
$$\nabla u_r(z)=P[\nabla u_r(e^{it})](z).$$ Thus for $|h|=1$ $$|\nabla u_r(z) h|=|P[\nabla u_r(e^{it})h](z)|\le P[|\nabla u_r(e^{it})h|](z).$$
It follows that
\begin{equation}\label{vara}|\nabla u_r|\le  K |F'_r|_{\infty}=KL_r.\end{equation}
Combining \eqref{vara} and \eqref{lin} we obtain
$$\sup\{|\nabla u(z)|: {|z|< 1}\}<\infty.$$ By using the fact proved in the previous proof that $u$ is Lipschitz continuous, and proceeding again the previous
proof: setting $L$ instead of $L_r$, $f$ instead of $f_r$, and an
arc length parametrization $g$ of $\gamma$ instead of $\tau_r$, and
using the fact that a smooth curve is chord-arc curve for some
$\lambda$, we obtain that
$$\mathrm{ess\,sup}\{|F'(t)|: 0\le t\le 2\pi\}=L(K,\lambda,\Upsilon)=:L,$$
and
\begin{equation}\label{varal}|\nabla u|\le  K |F'|_{\infty}=KL.
\end{equation} See remark below for an explicit estimate of $L$.
\end{proof}
\begin{remark}\label{rem}
Since $4|\mathcal M|\le |\gamma|^2$, the previous proof yields the
following estimate of a Lipschitz constant $L$ for a normalized
$K-$quasiconformal harmonic mapping between the unit disk and a
disk-type surface $\mathcal M$ bounded by a Jordan curve $\gamma\in
\,\mathscr C^{1,\mu}$ satisfying a $\lambda-$chord-arc condition.
\begin{equation}\label{L}L\le 8\left(K
C_\gamma\frac{\pi(2-\alpha)}{2\mu\alpha}\right)^{\frac{2-\alpha}{\mu\alpha}}
\left\{4 (1+2\lambda)|\gamma|\sqrt{\frac{\pi K}{\log
4}}\right\}^{\frac{2}{\alpha}},\end{equation} where $$\alpha =
\frac{8\Upsilon}{\pi K(1+2\lambda)^{2}}, \ \Upsilon\ge 1$$ and
$$C_{\gamma}=\max_{s\neq t}\frac{|g'(t)-g'(s)|}{|t-s|^\mu}.$$ Here $g$ is an
arc length parametrization of $\gamma$. See \cite{MP}, \cite{ps},
\cite{MMM} and \cite{trans} for more explicit (more precise)
constants, in the special case where $\gamma$ is the unit circle. We
can express $\lambda$ in terms of $g$ as follows
\begin{equation}\label{lama}\lambda = \sup_{s\neq t}\frac{|s-t|}{|g(s)-g(t)|}.\end{equation} It
is clear that $\lambda<\infty$, because $$\lim_{(s,t)\to
(\tau,\tau)}\frac{|s-t|}{|g(s)-g(t)|}=\frac{1}{|g'(\tau)|}=1.$$
\end{remark}
The following corollary of Theorem~\ref{theom} gives a quantitative
estimate of the lipschitz constant of a conformal mapping,
parameterizing a minimal surface with smooth boundary (see
\cite{nit} and \cite{les} for existential proof of this fact and
\cite{SW} and \cite{SW1} for related results).
\begin{corollary}\label{coco}
Let $u(x,y)=(u^1,\dots,u^n):\mathbf U\to \mathcal M$ be normalized
isotherm coordinates of a minimal surface $\mathcal M$ spanning a
Jordan curve $\gamma\in \,\mathscr C^{1,\mu}$ and assume that $g$ is
an arc-length parametrization of $\gamma$. Then $$|u_x|=|u_y|\le
L,$$ where
$$L= 8\left\{\frac{ C_\gamma(-3/4+
\lambda(1+\lambda))\pi}{2\mu}\right\}^{\frac{-3/4+\lambda(1+\lambda)}{\mu}}
\left\{\frac{4 (1+2\lambda)|\gamma|}{\sqrt{{\log
4}}}\right\}^{(1/2+\lambda)^2},$$ and $\lambda$ is defined in
\eqref{lama}. In particular if $\mu = 1$, we obtain
$$L= 8\left\{ \frac{\kappa_\gamma\pi}{2}\left[-\frac 34+ \lambda(1+\lambda)\right]\right\}^{-\tfrac 34+\lambda(1+\lambda)}
\left\{\frac{4 (1+2\lambda)|\gamma|}{\sqrt{{\log
4}}}\right\}^{(1/2+\lambda)^2},
$$ where $\kappa_\gamma$ is the largest curvature of $\gamma$
defined as $$\kappa_\gamma=\mathrm{ess}\sup_{0\le s\le
|\gamma|}|g''(s)|.$$
\end{corollary}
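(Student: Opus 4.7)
The plan is to deduce this corollary directly from Theorem~\ref{theom}, and more concretely from the explicit estimate~\eqref{L} in Remark~\ref{rem}, by specializing all the constants to the present setting. I would begin by observing that isothermal coordinates are, by definition, $1$-quasiconformal and harmonic, so Theorem~\ref{theom} applies with $K=1$. The conformality relations \eqref{surfeq1} give $|u_x|^2=|u_y|^2=J_u$ at every point; inserting this into \eqref{2} (with $\eta=1/2$) gives $|\nabla u|=|u_x|=|u_y|$ in the operator norm. Hence the desired estimate $|u_x|=|u_y|\le L$ is equivalent to $|\nabla u|\le L$, which is precisely the conclusion of the theorem.

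Next, to pin down the isoperimetric constant I would invoke Carleman's inequality (the first example listed after Definition~\ref{perdef}), which says that every minimal surface is $\Upsilon$-isoperimetric with $\Upsilon=\pi$. With $K=1$ and $\Upsilon=\pi$ in \eqref{L}, the H\"older exponent from Lemma~\ref{newle} becomes
\[
\alpha=\frac{8\pi}{\pi(1+2\lambda)^{2}}=\frac{8}{(1+2\lambda)^{2}},
\]
so a direct computation yields
\[
\frac{2}{\alpha}=\Bigl(\tfrac12+\lambda\Bigr)^{2},\qquad
\frac{2-\alpha}{\mu\alpha}=\frac{(1+2\lambda)^{2}-4}{4\mu}=\frac{-3/4+\lambda(1+\lambda)}{\mu},
\]
using the identity $(1+2\lambda)^{2}-4=4\bigl[\lambda(1+\lambda)-3/4\bigr]$. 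The base of the first factor in \eqref{L} then simplifies to $\dfrac{C_{\gamma}\pi\bigl(-3/4+\lambda(1+\lambda)\bigr)}{2\mu}$, and substituting everything into \eqref{L} produces exactly the formula for $L$ in the general $\mu$ case.

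Finally, for the special case $\mu=1$ the only remaining point is to identify $C_\gamma$ with the curvature bound $\kappa_{\gamma}$. Since $g$ is an arc-length parametrization we have $|g'|\equiv 1$ a.e.; the $\mathscr C^{1,1}$ hypothesis on $\gamma$ means $g'$ is Lipschitz, so $g''\in L^{\infty}$ and
\[
C_{\gamma}=\sup_{s\ne t}\frac{|g'(s)-g'(t)|}{|s-t|}=\operatorname*{ess\,sup}_{s}|g''(s)|=\kappa_{\gamma},
\]
because under arc-length parametrization $|g''(s)|$ is exactly the curvature of $\gamma$ at $g(s)$. Conceptually there is no real obstacle: everything is a direct specialization of the already-proved Theorem~\ref{theom} combined with the classical Carleman inequality for minimal surfaces, and the only care needed is in the routine but slightly fiddly algebra that rewrites the exponents in \eqref{L} in the compact form $(1/2+\lambda)^2$ and $[-3/4+\lambda(1+\lambda)]/\mu$.
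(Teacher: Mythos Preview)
Your algebra with the exponents is correct, but there is a genuine gap in the reduction. Theorem~\ref{theom} has as a standing hypothesis that $\mathcal M$ is a $\mathscr C^{1,\mu}$ \emph{regular} surface; this is what allows the conformal parametrization $\tau$ of Remark~\ref{rema} to exist and be bi-Lipschitz, and it is used throughout the approximation argument with $\tau_r$. A minimal surface spanning $\gamma$, however, is only known to be harmonic and conformal: it may well have branch points, so it need not be a $\mathscr C^{1,\mu}$ immersion and Theorem~\ref{theom} does \emph{not} apply directly. The paper addresses this by first invoking an external regularity result (Nitsche's theorem, \cite{nit}, see also \cite{Lw}) to conclude that $u$ is Lipschitz up to the boundary; once Lipschitz continuity is known, the explicit-constant part of the proof of Theorem~\ref{theom} (the computation leading to \eqref{L}) can be rerun with $g$ in place of $\tau_r$ without needing the surface to be regular. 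Your proposal skips this step entirely.

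There is also a smaller discrepancy: the stated $L$ in the corollary is not literally what you get by substituting $K=1$, $\Upsilon=\pi$ into \eqref{L}. The second brace in \eqref{L} carries a factor $\sqrt{\pi K/\log 4}$, whereas the corollary has only $1/\sqrt{\log 4}$. This improvement comes from using the sharp isoperimetric inequality $4\pi|\mathcal M|\le |\gamma|^2$ for minimal surfaces (Carleman) in place of the cruder $4|\mathcal M|\le |\gamma|^2$ used in Remark~\ref{rem}; the paper points this out explicitly. So ``substituting everything into \eqref{L} produces exactly the formula'' is not quite right either --- you need to redo the $L_\gamma(K)$ estimate from Lemma~\ref{newle} with the better bound on $|\mathcal M|$.
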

\begin{proof} Note first that, a minimal surface is not necessarily
regular. Thus we cannot apply directly Theorem~\ref{theom}. By
\cite[Theorem~1]{nit} (see also \cite[Theorem~1]{Lw}), $u$ is
Lipschitz continuous. Therefore the proof of Theorem~\ref{theom}
gives desired estimates. The constant is better than that of
\eqref{L}, because for minimal surfaces we have $\Upsilon = {\pi}$,
$K=1$ and $4\pi |\mathcal M|\le |\gamma|^2$.
\end{proof}

By using Theorem~\ref{theom} and isotherm coordinates near the
points in the boundary we obtain
\begin{theorem}
If $w: \mathcal N\to \mathcal M$ is a $K-$quasiconformal harmonic
mapping between $\,\mathscr C^{1,\mu}$ surfaces $\mathcal N$ and
$\mathcal M$ with $\,\mathscr C^{1,\mu}$ and compact boundaries,
then $w$ is Lipschitz.
\end{theorem}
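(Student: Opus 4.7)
The plan is to deduce the statement from Theorem~\ref{theom} by localizing the problem at each boundary point of $\mathcal N$, combined with standard interior regularity of harmonic functions.

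I would first observe that $w$ is automatically locally Lipschitz in the interior of $\mathcal N$. Indeed, in any local isothermal chart supplied by Lichtenstein's theorem (Remark~\ref{rema}), the ambient coordinate functions of $w$ are bounded harmonic functions, and classical interior gradient estimates give a uniform bound on $|\nabla w|$ on any compact subset that stays away from $\partial\mathcal N$. Since $\overline{\mathcal N}$ is compact, only a collar neighborhood of $\partial\mathcal N$ requires further argument.

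Now fix a boundary point $p\in\partial\mathcal N$ and set $q=w(p)\in\partial\mathcal M$. Using the $\mathscr C^{1,\mu}$ regularity of $\mathcal N$ and of $\partial\mathcal N$, I would construct a disk-type subsurface $\mathcal N_p\subset\overline{\mathcal N}$ whose boundary is a single $\mathscr C^{1,\mu}$ Jordan curve, formed by an arc of $\partial\mathcal N$ through $p$ together with a short interior arc joining its two endpoints (the corners being tangentialized by a standard $\mathscr C^{1,\mu}$ smoothing). Choosing the interior arc inside the region where $w$ is, by the interior regularity above, real analytic, its image under $w$ is again a $\mathscr C^{1,\mu}$ arc; hence $\mathcal M_q:=w(\mathcal N_p)$ is a disk-type subsurface of $\mathcal M$ bounded by a $\mathscr C^{1,\mu}$ Jordan curve. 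Theorem~\ref{theom} then applies to $w|_{\mathcal N_p}\colon\mathcal N_p\to\mathcal M_q$: Remark~\ref{rema} supplies a bi-Lipschitz conformal map $\tau\colon\mathbf U\to\mathcal N_p$; the composition $u:=w\circ\tau$ is $K$-quasiconformal and harmonic (both properties are preserved by conformal precomposition in two dimensions); and the Lipschitz bound delivered by Theorem~\ref{theom} for $u$ transfers to $w|_{\mathcal N_p}$ via the bi-Lipschitz character of $\tau$. Covering $\partial\mathcal N$ by finitely many such neighborhoods (possible by compactness) and combining with the interior bound yields a global Lipschitz constant for $w$.

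The main obstacle is the local geometric construction of $\mathcal N_p$ and the verification that its image has $\mathscr C^{1,\mu}$ boundary. The two points to check are that the interior arc bounding $\mathcal N_p$ can be placed inside the region where $w$ is real analytic (so that its image is automatically $\mathscr C^{1,\mu}$), and that the two corners at which this interior arc meets $\partial\mathcal N$ can be tangentialized so that both $\partial\mathcal N_p$ and $\partial\mathcal M_q$ become $\mathscr C^{1,\mu}$ Jordan curves to which Remark~\ref{rema} and Theorem~\ref{theom} apply. Once this mild geometric preparation is arranged, the remainder of the proof is a routine reduction to Theorem~\ref{theom}.
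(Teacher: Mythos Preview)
Your reduction to Theorem~\ref{theom} via a local disk-type subsurface and a conformal chart is the right idea, but the construction you propose has a circularity at the two corner points where the interior arc meets $\partial\mathcal N$. To apply Theorem~\ref{theom} you need the \emph{image} curve $\partial\mathcal M_q=w(\partial\mathcal N_p)$ to be $\mathscr C^{1,\mu}$. Away from the corners this is fine (the boundary piece lies on $\partial\mathcal M$, and the interior piece is the real-analytic image of a smooth arc). But at a corner $p'\in\partial\mathcal N$ you would need the image of your interior arc to meet $\partial\mathcal M$ with a well-defined tangent, matching that of $\partial\mathcal M$, and with a $\mu$-H\"older derivative across the junction. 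That requires $w$ to be at least $\mathscr C^1$ up to the boundary at $p'$, which is stronger than the Lipschitz conclusion you are trying to establish. ``Tangentializing'' on the source side does not help: without boundary differentiability of $w$ you cannot control the limiting direction of the image arc at $w(p')$.

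The paper sidesteps this by making the construction on the \emph{target} side. One chooses a disk-type subsurface $M_0\subset\mathcal M$ with $\mathscr C^{1,\mu}$ boundary containing an arc of $\partial\mathcal M$ through $q=w(p)$; here the smoothing of the corners is done directly in $\mathcal M$, where no unknown map is involved, so $\partial M_0\in\mathscr C^{1,\mu}$ by construction. One then sets $N_0=w^{-1}(M_0)$ and takes a conformal map $\varphi:\mathbf U\to N_0$ with $\varphi(1)=p$. Now $u=w\circ\varphi:\mathbf U\to M_0$ is $K$-quasiconformal and harmonic, and Theorem~\ref{theom} applies because the target boundary is $\mathscr C^{1,\mu}$. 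The preimage $N_0$ may well fail to have $\mathscr C^{1,\mu}$ boundary at the two corner preimages, but this is irrelevant: one only needs $\varphi$ to be bi-Lipschitz in a neighborhood of $1$, and since $\partial N_0$ coincides with $\partial\mathcal N\in\mathscr C^{1,\mu}$ near $p$, the local Kellogg--Warschawski theorem gives exactly that. Composing the Lipschitz bound for $u$ with the local bi-Lipschitz bound for $\varphi^{-1}$ yields the Lipschitz estimate for $w$ near $p$, and compactness of $\partial\mathcal N$ finishes the proof. If you switch your construction to the target side in this way, your argument goes through.
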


\begin{proof}
Under the conditions of theorem, $w$ has a continuous extension to
the boundary.  Take $p\in \partial N$,  let $q=w(p)\in
\partial M$ and choose a disk-type surface $M_0$ with smooth
boundary, containing an open Jordan arc $\gamma_q\subset
\partial M$, such that $q\in \gamma_q$. Let $\varphi$ be a
conformal mapping of the unit disk onto $N_0=u^{-1}(\mathcal M_0)$,
such that $\varphi(1)=p$. Then $u = w\circ \varphi$ is a
$K-$quasiconformal harmonic mapping of the unit disk onto $\mathcal
M_0\subset \mathcal M$. By Theorem~\ref{theom} $u$ is Lipschitz.
According to Remark~\ref{rema} $\varphi$ is bi-Lipschitz in some
small neighborhood of $1$. This implies that $w$ is Lipschitz in
some neighborhood of $p$. As $\partial \mathcal N$ is compact, it
follows that $w$ is Lipschitz near the boundary $\partial \mathcal
N$ of $\mathcal N$. The conclusion is that, $w$ is Lipschitz.
\end{proof}

\subsection{Remarks}
Our main theorem has an important assumption that the surface
$\mathcal M$ is a $\,\mathscr C^{1,\mu}$ regular surface. Since the
Lipschitz constant depends only on the boundary of $\mathcal M$, it
would be interesting to check whether the assumption that $\mathcal
M$ is regular is essential. In the case that $\mathcal M$ is a
minimizing surface spanning a sufficiently smooth Jordan curve, then
$\mathcal M$ is a regular surface, i.e. $\mathcal M$ has not branch
points inside neither on the boundary (see for example \cite{bw} and
reference therein for this topic). We expect that, our main result
still hold assuming only that the corresponding mapping $u=P[F]$ is
a q.c. harmonic mapping such that $F(\mathbf T)$ is a $ \,\mathscr
C^{1,\mu}$ Jordan curve. On the other hand, harmonic surfaces can
have branch points as well as q.c. harmonic surfaces because minimal
surfaces that are not minimizing surfaces are not free of branch
points. Therefore, probably, the corresponding harmonic q.c. mapping
is not bi-Lipschitz, without the assumption that the surface is
regular. In a recent result of the author (\cite{surfaces}), it is
proved that, if the surface is at least $\,\mathscr C^{2,\mu}$
regular, then such mapping is bi-Lipschitz, and we expect that the
class $\,\mathscr C^{2,\mu}$ can be replaced by $\,\mathscr
C^{1,\mu}$.

\end{document}